\documentclass{article}

\usepackage{amssymb}
\usepackage{amsmath}
\usepackage{amsthm}
\usepackage{mathrsfs}
\usepackage{hyperref}
\usepackage{amsrefs}
\usepackage{comment}

\usepackage[margin=1in]{geometry}
\usepackage{setspace}
\doublespacing

\newtheorem{theorem}{Theorem}[section]
\newtheorem{corollary}{Corollary}[theorem]
\newtheorem{lemma}[theorem]{Lemma}
\theoremstyle{definition}
\newtheorem{definition}[theorem]{Definition}
\newtheorem{assumption}[theorem]{Assumption}
\newtheorem{example}{Example}
\newtheorem*{question}{Question}

\newcommand{\p}{\mathfrak{p}}
\newcommand{\m}{\mathfrak{m}}
\newcommand{\fq}{\mathfrak{q}}
\newcommand{\fa}{\mathfrak{a}}
\newcommand{\fb}{\mathfrak{b}}
\newcommand{\spec}{\operatorname{Spec}}
\newcommand{\height}{\operatorname{ht}}
\newcommand{\ass}{\operatorname{Ass}}
\newcommand{\image}{\operatorname{Image}}
\newcommand{\depth }{\operatorname{depth}}
\def\cal{\mathcal}

\begin{document}
\title{Controlling the Dimensions of Formal Fibers of a Unique Factorization Domain at the Height One Prime Ideals \thanks{The authors thank the National Science Foundation for their support of this research via grant DMS-1347804.} \thanks{Sarah M. Fleming and Nina Pande thank the Clare Boothe Luce Foundation for support of their research.}}
\author{Sarah M. Fleming, Lena Ji, S. Loepp, Peter M. McDonald, Nina Pande, and David Schwein}

\maketitle

\begin{abstract}
Let $T$ be a complete local (Noetherian) equidimensional ring with maximal ideal $\m$ such that the Krull dimension of $T$ is at least two and the depth of $T$ is at least two.  Suppose that no integer of $T$ is a zerodivisor and that $|T|=|T/\m|$.  Let $d$ and $t$ be integers such that $1\leq d \leq  \dim T-1$, $0 \leq t \leq \dim T - 1$, and $d - 1 \leq t$.  Assume that, for every $\p\in\ass T$, $\height\p\leq d-1$ and that if $z$ is a regular element of $T$ and $Q \in \ass(T/zT)$, then $\height Q \leq d$.  
%In the case $d - 1 < t$, suppose that there is a finite set of height $t$ prime ideals $P_1, \ldots ,P_n$ of $T$ such that if $\p \in \ass T$ then $\p \subseteq P_i$ for some $i = 1,2, \ldots ,n$ and such that $P_i$ intersected with the prime subring of $T$ is $(0)$ for all $i = 1,2, \ldots , n$. 
We construct a local unique factorization domain $A$ such that the completion of $A$ is $T$ and such that the dimension of the formal fiber ring at every height one prime ideal of $A$ is $d - 1$ and the dimension of the formal fiber ring of $A$ at $(0)$ is $t$.
%We construct a local unique factorization domain $A$ such that the completion of $A$ is $T$ and such that $A$ satisfies the property that the formal fiber at every height one prime ideal of $A$ has the same dimension as the formal fiber of $A$ at $(0)$.  Moreover, we can control this dimension to be any value ranging from 0 to $\dim T-2$.  
\end{abstract}

\section{Introduction}

Let $A$ be a local (Noetherian) ring and let $T$ be the completion of $A$ with respect to its maximal ideal.  One fruitful way to study the relationship between $A$ and $T$ is to examine the relationship between $\spec A$ and $\spec T$, which we can do by looking at the formal fibers of $A$ at its prime ideals.  We define the formal fiber ring of $A$ at $\p\in\spec A$ as $T\otimes_A\kappa(\p)$, and we define the formal fiber of $A$ at $\p$ as $\spec(T\otimes_A\kappa(\p))$ where $\kappa(\p)$ is the residue field $A_{\p}/\p A_{\p}$. There is a one-to-one correspondence between elements of the formal fiber of $A$ at $\p$ and the elements of the inverse image of $\p$ under the mapping $\varphi : \spec T\rightarrow\spec A$ defined by $\varphi(\fq)=\fq\cap A$.  Because of this correspondence, the formal fibers of $A$ encode information about the relationship between the prime ideals of $T$ and the prime ideals of $A$.  Note that, since $T$ is a faithfully flat extension of $A$, the map $\varphi$ is surjective.  Using Matsumura's notation in~\cite{matsumura88}, we denote the Krull dimension of the formal fiber ring of $A$ at the prime ideal $\p$ by $\alpha(A,\p)$.  The supremum of $\alpha(A,\p)$ over all prime ideals $\p$ of $A$ is denoted by $\alpha(A)$.  When $A$ is an integral domain, we refer to the formal fiber ring of $A$ at the prime ideal $(0)$ as the generic formal fiber ring, and the formal fiber of $A$ at $(0)$ as the generic formal fiber.

Matsumura proved in~\cite{matsumura88} that, given two elements $\p,\p'\in\spec A$, if $\p\subseteq\p'$, then $\alpha(A,\p)\geq\alpha(A,\p')$.  Additionally, he proved that, given a ring of positive dimension $n$, the maximum possible value of $\alpha(A)$ is $n-1$.  Furthermore, if $A$ is a ring of essentially finite type over a field, Matsumura showed that $\alpha(A) = n - 1$ and that $\alpha(A,\p)=n-1-\height\p$.  In other words, for rings of essentially finite type over a field, the dimensions of the formal fiber rings are completely understood.  However, for a general ring, not much is known about how the inequality $\alpha(A,\p)\geq\alpha(A,\p')$ behaves when varying the heights of $\p$ and $\p'$.  We are interested specifically in what happens when $A$ is an integral domain, $\p=(0)$, and $\p'$ is a height one prime ideal of $A$.  In particular, Heinzer, Rotthaus, and Sally informally posed the following question:

\begin{question}
Let A be an excellent local ring with $\alpha(A)>0$, and let $\Delta$ be the set $\{\p\in\spec A\mid\height\p=1\text{ and }\alpha(A,\p) = \alpha(A)\}$.
Is $\Delta$ a finite set?
\end{question}

\noindent We first note that if $A$ is a Noetherian integral domain, then $\alpha{(A)} = \alpha(A,(0))$.  Given Matsumura's results, one might expect that, for an integral domain $A$, the dimension of the formal fiber ring of $A$ at ``most" height one prime ideals is strictly less than the dimension of the formal fiber ring of $A$ at the zero ideal.  In other words, one might expect that the set $\Delta$ is ``small," so the above question is a very natural one to ask.

In~\cite{boocher10}, Adam Boocher, Michael Daub, and S. Loepp showed that the set $\Delta$ need not be finite by constructing an excellent local unique factorization domain $A$ for which $\Delta$ is countably infinite.  However, the UFD that they construct has uncountably many height one prime ideals, so the set $\Delta$ for their ring is ``small" in the sense that it does not contain ``most" of the height one prime ideals of $A$.  Based on this result, one might think it unlikely that there exists a ring such that $\Delta$ is an uncountable set and even less likely that a ring exists such that $\Delta$ contains {\em all} of its height one prime ideals.  In this paper, we construct a non-excellent unique factorization domain $A$ for which this property holds; that is, every height one prime ideal of $A$ is in $\Delta$.  In fact, we generalize the result so that if $d$ and $t$ are integers such that $1 \leq d \leq \dim T - 1$, $0 \leq t \leq \dim T - 1$, and $d - 1 \leq t$, then our $A$ satisfies the property that every height one prime ideal $\p$ of $A$ has a formal fiber ring of dimension $d - 1$ and the generic formal fiber ring of $A$ has dimension $t$.  In other words, we show that the relationship between the dimensions of the formal fiber rings of a ring at its height one prime ideals and the dimension of the generic formal fiber ring of the ring is restricted only by the inequality given by Matsumura.  That is, if $\p$ is a height one prime ideal of a ring $A$, then $\alpha (A,\p) \leq \alpha (A,(0))$, but
for any nonnegative integer less than or equal to the dimension of the generic formal fiber ring,
there exists a ring such that the dimension of the formal fiber rings at each height one prime ideals is equal to this integer.

Our main result is given by Theorem~\ref{bigthm}.  Let $T$ be a complete local equidimensional ring with maximal ideal $\m$, and suppose that the Krull dimension of $T$ is at least two.  Also suppose that $|T|=|T/\m|$, no integer is a zerodivisor in $T$, and $T$ has depth greater than one. Let $d$ and $t$ be integers with $1\leq d\leq\dim T-1$, $0 \leq t \leq \dim T - 1$, and $d - 1 \leq t$.  Suppose that $\height\p\leq d-1$ for every $\p\in\ass T$ and that if $z$ is a regular element of $T$ and $Q \in \ass(T/zT)$, then $\height Q \leq d$.  We show that there exists a local unique factorization domain $A$ whose completion is $T$ such that $\alpha(A,(0))=t$ and, for every $\p\in\spec A$ with $\height\p=1$, $\alpha(A,\p)= d - 1$.  
That is to say, with relatively weak conditions on $T$, we can find a local UFD $A$ where the dimensions of the formal fiber rings of $A$ at all height one prime ideals are equal.  Moreover, we can control the dimension of the generic formal fiber ring of $A$, and the dimension of the formal fiber rings of $A$ at its height one prime ideals can be any nonnegative integer less than or equal to the dimension of the generic formal fiber ring of $A$.
In particular, if we set $d - 1 = t$, then we can find a UFD $A$ such that $\Delta$ is the set of {\em all} height one prime ideals of $A$, which, in our case, will be an uncountable set.  Furthermore, in this case, we have the ability to set the dimension of the formal fiber rings at our height one prime ideals and the generic formal fiber ring to be any integer value between $0$ and $\dim T-2$.

In ~\cite{heitmann93}, Heitmann found necessary and sufficient conditions for a complete local ring to be the completion of a unique factorization domain.  For our ring $A$ to exist, then, our ring $T$ must satisfy those conditions.  In particular, this means that $T$ must have depth greater than one and satisfy the condition that no integer of $T$ is a zerodivisor.  We impose the additional condition that $\dim T\geq2$ to avoid any trivial examples. The remaining conditions we put on $T$ are relatively weak.  We base our construction on the one employed by Heitmann in~\cite{heitmann93}, modifying and adding lemmas so that we can control the dimensions of the formal fiber rings of $A$ at $(0)$ and at the height one prime ideals.  
%Aside from certain details and conditions, this kind of construction is often used to construct examples of local rings with certain properties.  Another example can be found in~\cite{charters04}, where Charters and Loepp construct a local ring with a semilocal generic formal fiber.  
We employ Lemma~\ref{comp}, which gives necessary and sufficient conditions for a quasi-local subring of a complete local ring $T$ to be Noetherian and have completion $T$.  In particular, if $T$ is a complete local ring with maximal ideal $\m$ and $A$ is a quasi-local subring of $T$ with maximal ideal $\m \cap A$, then $A$ is Noetherian and has completion isomorphic to $T$ if the map $A\rightarrow T\slash\m^2$ is onto and every finitely generated ideal $\fa$ of $A$ is closed; that is, $\fa T\cap A=\fa$.  To ensure that our subring $A$ satisfies these conditions and has the desired formal fibers, we construct a strictly ascending chain of subrings of $T$ starting with a localization of the prime subring of $T$.  We then adjoin elements in of $T$ in succession to make the above map onto, to close up finitely generated ideals, and to manipulate the dimensions of the formal fiber rings.  We argue that the union of the subrings in this chain is our desired local UFD $A$.

In this paper, all rings are commutative with unity.  We define a quasi-local ring as any ring with exactly one maximal ideal and a local ring as a quasi-local Noetherian ring.  We use $(T,\m)$ to refer to a local ring $T$ with maximal ideal $\m$ and $\spec_k T$ to refer to the set of prime ideals of $T$ of height $k$.  $\widehat{T}$ will denote the completion of $T$ in the $\m$-adic topology.

\section{Preliminaries}

Before we begin our construction, we present, without proof, several lemmas which we use throughout the construction.  The following lemma will be used to show that the completion of $A$ is indeed $T$.

\begin{lemma}[{\cite[Proposition 1]{heitmann94}}]
\label{comp}
If $(A,\m\cap A)$ is a quasi-local subring of a complete local ring $(T,\m)$, $A\rightarrow T\slash\m^2$ is onto and $\fa T\cap A=\fa$ for every finitely generated ideal $\fa$ of $A$, then $A$ is Noetherian and the natural homomorphism $\widehat{A}\rightarrow T$ is an isomorphism.
\end{lemma}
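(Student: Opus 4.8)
The plan is to derive the two conclusions --- that $A$ is Noetherian and that the natural map $\widehat{A}\to T$ is an isomorphism --- from the Noetherianness of $T$ together with the two hypotheses: that every finitely generated ideal of $A$ is contracted from $T$, and that $A\to T/\m^2$ is onto. Throughout, write $\m_A=\m\cap A$ for the maximal ideal of $A$.

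First I would show $A$ is Noetherian, using only the contraction hypothesis (and not the surjectivity of $A\to T/\m^2$). Let $\fb$ be any ideal of $A$. Then $\fb T$ is an ideal of the Noetherian ring $T$, hence finitely generated; since $\fb T$ is the sum of the ideals $aT$ over $a\in\fb$, finitely many such $a$ suffice, so there exist $a_1,\dots,a_k\in\fb$ with $\fb T=(a_1,\dots,a_k)T$. Set $\fa=(a_1,\dots,a_k)A$, a finitely generated ideal of $A$ with $\fa\subseteq\fb$. Then $\fa T=\fb T$, so by hypothesis $\fa=\fa T\cap A=\fb T\cap A\supseteq\fb\cap A=\fb$; combined with $\fa\subseteq\fb$ this forces $\fb=\fa$, which is finitely generated. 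Hence $A$ is Noetherian, and in particular $\m_A$ and every power $\m_A^n$ is finitely generated.

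Next I would compare the $\m_A$-adic topology on $A$ with the topology it inherits as a subspace of $(T,\m)$. Surjectivity of $A\to T/\m^2$ means $T=A+\m^2$, and intersecting with $\m$ gives $\m=\m_A+\m^2$, hence $\m=\m_A T+\m^2$; since $\m$ is a finitely generated module over the Noetherian local ring $T$, Nakayama's lemma yields $\m=\m_A T$, and therefore $\m^n=\m_A^n T$ for all $n$. Because $\m_A^n$ is finitely generated in $A$, the contraction hypothesis now gives $\m_A^n=\m_A^n T\cap A=\m^n\cap A$, so the $\m_A$-adic topology on $A$ agrees with the subspace topology. A short induction on $k$ from $\m=\m_A+\m^2$ --- showing $\m^k=\m_A^k+\m^{k+1}$ and then absorbing $\m_A^k\subseteq A$ --- yields $T=A+\m^n$ for every $n$, so the natural map $A/(\m^n\cap A)\to T/\m^n$ is an isomorphism for each $n$.

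Finally, passing to the inverse limit over $n$ of the isomorphisms $A/\m_A^n=A/(\m^n\cap A)\to T/\m^n$ identifies $\widehat{A}=\varprojlim_n A/\m_A^n$ with $\varprojlim_n T/\m^n=\widehat{T}=T$, and this identification is precisely the natural homomorphism $\widehat{A}\to T$ induced by $A\hookrightarrow T$; hence that map is an isomorphism. I do not expect any genuinely hard step: the only subtlety is the order of operations --- that Noetherianness of $A$ is forced by closedness of finitely generated ideals together with Noetherianness of $T$ alone, whereas the surjectivity of $A\to T/\m^2$ is exactly what makes the two topologies on $A$ agree and makes each $A/(\m^n\cap A)\to T/\m^n$ surjective --- plus the small point that in choosing generators of $\fb T$ (and of $\m=\m_A T$) one must take them inside $\fb$ (inside $\m_A$), which is legitimate because those ideals of $T$ are extended from $A$.
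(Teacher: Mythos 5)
Your argument is correct. Note that the paper itself gives no proof of this lemma: it is stated in the Preliminaries ``without proof'' and cited as Proposition 1 of Heitmann's 1994 paper, and your argument is essentially the standard one from that source --- Noetherianness of $A$ from the contraction hypothesis alone (generators of $\fb T$ chosen inside $\fb$), then $\m = (\m\cap A)T$ via Nakayama from $T = A + \m^2$, the identification $A/(\m\cap A)^n \cong T/\m^n$, and passage to the inverse limit. All the steps check out, including the two small points you flag (choosing generators of extended ideals inside the original ideal, and the division of labor between the two hypotheses).
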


We use the following lemma to find prime ideals of $T$ of height $d$ to include in the formal fibers at the height one prime ideals of $A$.

\begin{lemma}[{\cite[Lemma 2.3]{charters04}}]
\label{modcard}
Let $(T,\m)$ be a complete local ring of dimension at least one.  Let $\p$ be a nonmaximal prime ideal of $T$.  Then $|T/\p|=|T|\geq 2^{\aleph_0}$.
\end{lemma}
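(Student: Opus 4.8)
The plan is to pin down $|T/\p|$ and $|T|$ at once by sandwiching both between the same two cardinals expressed through the residue field $\kappa := T/\m$. Set $R := T/\p$; since $\p$ is nonmaximal, $R$ is a complete local \emph{domain} with maximal ideal $\mathfrak{n} := \m/\p$, residue field $\kappa$, and $\dim R \ge 1$ (the strict containment $\p \subsetneq \m$ descends to $0 \subsetneq \mathfrak{n}$ in $\spec R$). I would prove the chain
\[
|\kappa|^{\aleph_0} \;\le\; |R| \;\le\; |T| \;\le\; |\kappa|^{\aleph_0},
\]
which forces $|T/\p| = |T| = |\kappa|^{\aleph_0}$; as $\kappa$ is a field, $|\kappa| \ge 2$, so $|\kappa|^{\aleph_0} \ge 2^{\aleph_0}$, and the lemma follows. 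The inequality $|R| \le |T|$ is immediate from the surjection $T \twoheadrightarrow R$.

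For the upper bound, note that since $T$ is Noetherian local we have $\bigcap_n \m^n = 0$ by Krull's intersection theorem, so $T$ embeds in $\prod_{n \ge 1} T/\m^n$. Each $T/\m^n$ has finite length with all composition factors isomorphic to $\kappa$; moreover $\dim T \ge 1$ forces $\m^n \ne \m^{n+1}$ for every $n$ (otherwise Nakayama would give $\m^n = 0$, hence $\dim T = 0$), so $|T/\m^n|$ equals $|\kappa|$ if $\kappa$ is infinite and is finite if $\kappa$ is finite. In either case $|T| \le \prod_n |T/\m^n| \le |\kappa|^{\aleph_0}$.

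For the lower bound I would invoke the Cohen structure theorem: the complete local domain $R$, of dimension $m := \dim R \ge 1$, is module-finite over a complete regular local subring $R_0$, where $R_0 \cong \kappa[[y_1,\dots,y_m]]$ in the equicharacteristic case and $R_0 \cong V[[y_1,\dots,y_{m-1}]]$ for a complete DVR $V$ with residue field $\kappa$ in the mixed-characteristic case. A direct count gives $|R_0| = |\kappa|^{\aleph_0}$ in both cases (the power-series coefficients range over $\kappa^{\mathbb{N}^m}$; and in a complete DVR with uniformizer $\pi$ every element has a unique expansion $\sum_{i \ge 0} s_i \pi^i$ over a fixed set of residue representatives), whence $|R| \ge |R_0| = |\kappa|^{\aleph_0}$ because $R_0 \subseteq R$. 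I expect this lower bound to be the delicate step: one must apply the structure theorem correctly in the mixed-characteristic and the $m = 1$ cases (where $R_0 = V$) and confirm the cardinality of a complete DVR, not merely the clean power-series situation. (If one wants only the weaker conclusion $|T/\p| \ge 2^{\aleph_0}$, it can be obtained without the structure theorem: fixing $0 \ne x \in \mathfrak{n}$, the assignment $\epsilon \mapsto \sum_{i \ge 1}\epsilon(i)x^i$ is a well-defined injection $\{0,1\}^{\mathbb{N}} \hookrightarrow R$, since if $\epsilon$ and $\delta$ first differ at index $k$ then their images differ by $x^k$ times a unit, which is nonzero as $R$ is a domain.)
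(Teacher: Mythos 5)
Your argument is correct. Note that the paper does not prove this lemma at all: it is quoted verbatim, without proof, from Lemma~2.3 of Charters--Loepp, so there is nothing internal to compare against. Your sandwich $|\kappa|^{\aleph_0} \le |T/\p| \le |T| \le |\kappa|^{\aleph_0}$ is the standard way to get the full statement: the upper bound via $T \hookrightarrow \prod_n T/\m^n$ is fine (and you don't actually need $\m^n \ne \m^{n+1}$ there, since $\prod_n |\kappa|^{\ell_n} \le |\kappa|^{\aleph_0}$ regardless), and the lower bound via Cohen's structure theorem, with the DVR and $m=1$ cases handled as you indicate, is sound; the final inequality $|\kappa|^{\aleph_0} \ge 2^{\aleph_0}$ closes it. Your parenthetical elementary injection $\{0,1\}^{\mathbb{N}} \hookrightarrow T/\p$ only yields the weaker bound $|T/\p| \ge 2^{\aleph_0}$, not the equality $|T/\p| = |T|$, which is what the construction actually uses (in the proof of Lemma~\ref{cosets} one needs $|S/\p\cap S| < |T| = |T/\p|$), so the Cohen-theorem step cannot be dispensed with.
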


Throughout our construction, we adjoin elements of $T$ to our intermediate subrings.  The following two lemmas ensure that we can adjoin the necessary elements of $T$ while also preserving the desired properties of our subrings.

\begin{lemma}[{\cite[Lemma 2]{heitmann93}}]
\label{countavoidance}
Let $(T,\m)$ be a complete local ring and let $D\subset T$.  Suppose $C\subset\spec T$ such that $\m\not\in C$, and suppose $\fa$ is an ideal of $T$ such that $\fa\not\subseteq\p$ for all $\p\in C$.  If $C$ and $D$ are countable, then
\[
\fa\nsubseteq\bigcup\{t+\p\mid t\in D, \p\in C\}.
\]
\end{lemma}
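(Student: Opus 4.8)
The plan is to produce, by $\m$-adic successive approximation, a single element $x\in\fa$ that simultaneously avoids every coset $t+\p$ with $t\in D$, $\p\in C$. Since an ideal is never empty, the claim is trivial when $C=\emptyset$ or $D=\emptyset$, so I would assume both are nonempty and, using countability, list the pairs in $D\times C$ as $(t^{(1)},\p^{(1)}),(t^{(2)},\p^{(2)}),\dots$. The key structural observation is that, because $\m\notin C$, each $\p^{(k)}$ is a nonmaximal prime, so $T/\p^{(k)}$ is a Noetherian local domain of positive dimension; in particular $(\m^N+\p^{(k)})/\p^{(k)}=(\m/\p^{(k)})^N\neq 0$ for every $N$, and Krull's intersection theorem applied in $T/\p^{(k)}$ gives $\bigcap_N(\p^{(k)}+\m^N)=\p^{(k)}$.

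Concretely, I would build elements $x_0=0,x_1,x_2,\dots$ of $\fa$ and integers $0=N_0<N_1<N_2<\cdots$ so that, for every $n$: (i) $x_n-x_{n-1}\in\fa\cap\m^{N_{n-1}}$; (ii) $x_n-t^{(k)}\notin\p^{(k)}$ for all $k\le n$; (iii) $x_n-t^{(k)}\notin\p^{(k)}+\m^{N_n}$ for all $k\le n$. In the inductive step I first arrange (ii) for $k=n$: if $x_{n-1}-t^{(n)}\notin\p^{(n)}$ set $x_n=x_{n-1}$; otherwise pick $a\in\fa\setminus\p^{(n)}$ (using $\fa\not\subseteq\p^{(n)}$) and $u\in\m^{N_{n-1}}\setminus\p^{(n)}$ (using that $\p^{(n)}$ is nonmaximal, so $\m^{N_{n-1}}\not\subseteq\p^{(n)}$), and set $x_n=x_{n-1}+au$, so that $au\in\fa\cap\m^{N_{n-1}}$, $au\notin\p^{(n)}$, and hence $x_n-t^{(n)}=(x_{n-1}-t^{(n)})+au\notin\p^{(n)}$. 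The correction $au$ lies in $\m^{N_{n-1}}$, so by (iii) at stage $n-1$ it cannot push $x_n-t^{(k)}$ into $\p^{(k)}$ for $k<n$; thus (ii) holds for all $k\le n$. Then I choose $N_n>N_{n-1}$ large enough that $x_n-t^{(k)}\notin\p^{(k)}+\m^{N_n}$ for every $k\le n$, which is possible by the Krull intersection statement above applied to each of the finitely many indices $k\le n$.

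Finally I would set $x=\lim_n x_n$: the limit exists in $T$ because $(x_n)$ is Cauchy by (i) (since $N_n\to\infty$), and $x\in\fa$ because $\fa$, being an ideal of the Noetherian ring $T$, is $\m$-adically closed. For each $k$ and each $n\ge k$ one has $x-x_n\in\m^{N_n}$, while (iii) gives $x_n-t^{(k)}\notin\p^{(k)}+\m^{N_n}$; therefore $x-t^{(k)}=(x_n-t^{(k)})+(x-x_n)\notin\p^{(k)}$, i.e. $x\notin t^{(k)}+\p^{(k)}$. As this holds for every pair, $x$ witnesses $\fa\nsubseteq\bigcup\{t+\p\mid t\in D,\ \p\in C\}$. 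The one delicate point — and the step I expect to be the real obstacle — is exactly the tension between convergence and avoidance: once $x_n$ has been steered out of a coset $t^{(k)}+\p^{(k)}$, all later corrections must be small enough not to undo this. Condition (iii), which keeps $x_n-t^{(k)}$ out of an entire $\m$-adic neighbourhood of $\p^{(k)}$ rather than merely out of $\p^{(k)}$ itself, is what resolves this, and it is available precisely because each $\p^{(k)}$ is nonmaximal, so that $T/\p^{(k)}$ is a positive-dimensional domain in which Krull's intersection theorem applies.
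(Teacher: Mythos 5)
Your argument is correct: the paper states this lemma without proof (citing Heitmann), and your successive-approximation construction --- enumerating the countably many cosets, correcting $x_{n-1}$ by an element of $\fa\cap\m^{N_{n-1}}$ outside $\p^{(n)}$, and using $\bigcap_N(\p^{(k)}+\m^{N})=\p^{(k)}$ to lock in each avoidance before passing to the limit in the complete ring $T$ --- is essentially Heitmann's original proof. The only quibble is in your closing commentary: Krull's intersection theorem needs no nonmaximality hypothesis; what nonmaximality actually buys is the existence of $u\in\m^{N}\setminus\p^{(n)}$, exactly as you use it earlier.
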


\begin{lemma}[{\cite[Lemma 3]{heitmann93}}]
\label{avoidance}
Let $(T,\m)$ be a local ring and let $D\subset T$.  Suppose $C\subset\spec T$ and $\fa$ is an ideal of $T$ such that $\fa\not\subseteq\p$ for all $\p\in C$.  If $|C\times D|<|T/\m|$, then 
\[
\fa\nsubseteq\bigcup\{t+\p\mid t\in D, \p\in C\}.
\]
\end{lemma}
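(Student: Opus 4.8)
I would prove this by pushing everything into a vector space over the residue field $K := T/\m$. First dispose of the degenerate cases: if $C=\emptyset$ or $D=\emptyset$ the displayed union is empty, and since $\fa$ contains $0$ it is not contained in the empty set; and if $C\neq\emptyset$, the hypothesis $\fa\not\subseteq\p$ forces $\fa\neq(0)$. So assume $C,D\neq\emptyset$, so that $|C\times D|<|K|$. Because $T$ is Noetherian, $\fa$ is finitely generated, so $V:=\fa/\m\fa$ is a nonzero \emph{finite-dimensional} $K$-vector space (nonzero by Nakayama's lemma).

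The key structural fact I would establish is that for each $\p\in C$, the image $W_\p\subseteq V$ of $\fa\cap\p$ under the projection $\fa\twoheadrightarrow\fa/\m\fa$ is a \emph{proper} $K$-subspace of $V$. Indeed, if $W_\p=V$ then $\fa=(\fa\cap\p)+\m\fa\subseteq\p+\m\fa$, and multiplying repeatedly by $\m$ yields $\fa\subseteq\p+\m^n\fa\subseteq\p+\m^n$ for every $n$; since every ideal of the Noetherian local ring $T$ is closed in the $\m$-adic topology, the Krull intersection theorem gives $\fa\subseteq\bigcap_n(\p+\m^n)=\p$, contradicting $\fa\not\subseteq\p$.

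With the $W_\p$ proper, suppose toward a contradiction that $\fa\subseteq\bigcup\{t+\p\mid t\in D,\ \p\in C\}$. For each pair $(t,\p)$ with $\fa\cap(t+\p)\neq\emptyset$, choosing any $a_{t,\p}$ in it gives $\fa\cap(t+\p)=a_{t,\p}+(\fa\cap\p)$, whose image in $V$ is the proper affine subspace $\overline{a_{t,\p}}+W_\p$. Since every element of $\fa$ lies in some $t+\p$, hence in some $\fa\cap(t+\p)$, the space $V$ is a union of at most $|C\times D|<|K|$ proper affine subspaces. That is impossible: a vector space over a field $K$ is never the union of strictly fewer than $|K|$ proper affine subspaces. (When $K$ is finite this is a point count, since a proper affine subspace has at most $|K|^{\dim V-1}$ points, so fewer than $|K|$ of them cannot cover the $|K|^{\dim V}$ points of $V$; when $K$ is infinite one intersects the union with the moment curve $\{(\lambda,\lambda^2,\dots,\lambda^{\dim V}):\lambda\in K\}$, which meets each proper affine subspace in at most $\dim V<|K|$ points, so fewer than $|K|$ of them cannot cover its $|K|$ points.) Unwinding the contradiction: any $a\in\fa$ lifting a point of $V$ lying outside all the $\overline{a_{t,\p}}+W_\p$ satisfies $a\notin t+\p$ for every $(t,\p)$, which is exactly the conclusion.

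\textbf{Main point / obstacle.} The two load-bearing ingredients are the properness of the $W_\p$ — the only place the Noetherian hypothesis enters, via the Krull intersection theorem — and the vector-space covering fact at the sharp cardinality threshold. The strictness of $|C\times D|<|T/\m|$ is essential and cannot be relaxed: a countably infinite $\mathbb{Q}$-vector space \emph{is} a union of countably many proper affine subspaces (its singletons), so the argument must genuinely use the strict inequality; and it is the finite-dimensionality of $V$, guaranteed by the finite generation of $\fa$, that makes the moment-curve count available in the infinite-residue-field case.
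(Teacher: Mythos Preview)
The paper does not prove this lemma; it is quoted verbatim from \cite{heitmann93} in the preliminaries section, which is introduced with ``we present, without proof, several lemmas.'' So there is no in-paper argument to compare against.

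Your argument is correct and self-contained. The reduction to the finite-dimensional $K$-vector space $V=\fa/\m\fa$ is the natural move, and the affine-covering bound via the moment curve handles both the finite and infinite residue-field cases at the sharp cardinality threshold. One minor streamlining: your properness step for $W_\p$ can bypass the Krull intersection theorem entirely --- from $\fa=(\fa\cap\p)+\m\fa$ you get $\m\cdot\bigl(\fa/(\fa\cap\p)\bigr)=\fa/(\fa\cap\p)$, and Nakayama on this finitely generated module gives $\fa\subseteq\p$ directly. Otherwise nothing to add.
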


\section{The Construction}

Here we describe the construction of our local UFD $A$.  We begin with the definition of a $Z_d$-subring of $T$.  If we can maintain $Z_d$-subrings throughout the construction, then the formal fiber rings of $A$ will be of the desired dimension.

\begin{definition}\label{zsub} 
Let $(T,\m)$ be a complete local ring, let $d$ be an integer such that $1\leq d\leq \dim T-1$, and let $(R,\m\cap R)$ be a quasi-local unique factorization domain contained in $T$ satisfying the following:
\begin{description}
\item{(i)} $|R|\leq\sup(\aleph_0,|T/\m|)$ with equality only if $T/\m$ is countable,
\item{(ii)} $\fq\cap R=(0)$ for all $\fq\in\ass T$,
\item{(iii)} if $t\in T$ is regular and $\fq\in\ass(T/tT)$, then $\height(\fq\cap R)\leq 1$, and
\item{(iv)} there exists a set $\cal{Q}_R \subset \spec_d T$ such that the map $f: \cal{Q}_R \longrightarrow \spec_1 R$ given by $f(\fq) = \fq \cap R$ is a bijection.
%if $p$ is a prime element of $R$, then there exists one corresponding $\fq_p\in\spec_d T$ such that $\fq_p\cap R=pR$.  We let $\cal{Q}_R$ denote the set $\bigcup \{ \fq_p\}$ where the union is taken over all prime elements $p$ of $R$.
\end{description}
\noindent Then $R$ is called a \emph{$Z_d$-subring of $T$ with distinguished set $\cal{Q}_R$}.  
%If $R$ is a $Z_d$-subring of $T$ with distinguished set $\cal{Q}_R$, and we are in a setting where the set $\cal{Q}_R$ is not important, or is not yet defined, we will simply say that $R$ is a $Z_d$-subring. 
 Note that if $(R, \m\cap R)$ satisfies conditions (i)-(iii) but not necessarily (iv), $R$ is an \emph{$N$-subring of $T$}, as defined in~\cite{heitmann93}.
\end{definition}

\begin{definition} If $R\subseteq S$ are $Z_d$-subrings of $T$ with distinguished sets $\cal{Q}_R$ and $\cal{Q}_S$, we say that $S$ is an \emph{$A_+$-extension of $R$} if:
\begin{description}
\item{(i)} prime elements of $R$ are prime in $S$,
\item{(ii)} $|S|\leq\sup(\aleph_0,|R|)$, and 
\item{(iii)} $\cal{Q}_R\subseteq\cal{Q}_S$.
\end{description}
\noindent If $R\subseteq S$ are $N$-subrings of $T$ satisfying conditions (i) and (ii) but not necessarily (iii), then $S$ is an \emph{$A$-extension of $R$}, as defined in~\cite{heitmann93}.
\end{definition}

%Many of the statements of our lemmas require the same conditions to be placed on our complete local ring $T$ and subring $R$.  
In order to simplify the statements of many of our subsequent lemmas, we will refer to the following assumption.

\begin{assumption} \label{assume}
$(T,\m)$ is a complete local equidimensional ring with $2\leq\dim T$, $d$ is an integer such that
$1\leq d\leq\dim T-1$, and $(R,\m\cap R)$ is a $Z_d$-subring of~$T$ with distinguished set $\cal{Q}_R$.
\end{assumption}

The next lemma is used to adjoin a coset representative of an element of $T/\m^2$ to our subring $R$.  Applying this lemma infinitely often will allow us to ensure that the map $A\rightarrow T/\m^2$ is onto so that we can employ Lemma \ref{comp}.

\begin{lemma}\label{cosets}
Under Assumption~\ref{assume}, let $t\in T$ and suppose $\depth T>1$.  Let $P$  be a nonmaximal prime ideal of $T$ such that $P \cap R = (0)$.  Then there exists an $A_+$-extension $S$ of $R$ such that $t+\m^2\in\image(S\rightarrow T/\m^2)$ and $P \cap S = (0)$.
\end{lemma}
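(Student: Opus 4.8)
I would follow the template of the coset–adjunction step in Heitmann's construction of unique factorization domains in~\cite{heitmann93}: adjoin to $R$ a single element of $t+\m^2$ that is transcendental over $R$, localize, and verify the required properties, the genuinely new point being that $S$ must also be equipped with a distinguished set $\cal{Q}_S\supseteq\cal{Q}_R$. If $t+\m^2$ already lies in $\image(R\to T/\m^2)$, take $S=R$; otherwise the goal is to produce $x\in t+\m^2$ transcendental over $R$ and set $S:=R[x]_{(\m\cap R[x])}$. Since $R$ is a UFD and $x$ is transcendental, $R[x]$ is a polynomial ring over a UFD, hence a UFD, and $S$ is a quasi-local UFD with maximal ideal $\m\cap S$.

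To pin down $x$, let $C:=\ass T\cup\cal{Q}_R\cup\{P\}$. Then $\m\notin C$: $\depth T>1$ keeps $\m$ out of $\ass T$, every prime of $\cal{Q}_R$ has height $d\le\dim T-1$, and $P$ is nonmaximal; and $\m^2\not\subseteq\fq$ for every $\fq\in C$, since $\m^2\subseteq\fq$ would force $\fq=\m$. For $\fq\in C$ and $p\in R[Y]$ with nonzero image in $(T/\fq)[Y]$, the set $\{y\in T:p(y)\in\fq\}$ is a union of finitely many cosets of $\fq$, one for each root in $T/\fq$ of that image. The family of all such cosets, over $\fq\in C$ and such $p$, is indexed by a set of cardinality at most $\sup(\aleph_0,|R|)$, which by Definition~\ref{zsub}(i) is at most $\sup(\aleph_0,|T/\m|)$, with equality only when $T/\m$ is countable. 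So, applying Lemma~\ref{countavoidance} when $T/\m$ is countable and Lemma~\ref{avoidance} when $T/\m$ is uncountable, both with $\fa=\m^2$, I can choose $m\in\m^2$ avoiding all of the corresponding translated cosets; with $x:=t+m$, the image of $x$ in $T/\fq$ is transcendental over the image of $R$ for every $\fq\in C$, and in particular (taking $\fq=P$, recalling $P\cap R=(0)$) $x$ is transcendental over $R$.

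Now $P\cap R=(0)$ and transcendence of $x$ over $R$ modulo $P$ give $P\cap R[x]=(0)$, hence $P\cap S=(0)$; likewise $\fq\cap S=(0)$ for $\fq\in\ass T$, which is Definition~\ref{zsub}(ii) for $S$, and $t+\m^2=x+\m^2\in\image(S\to T/\m^2)$. Definition~\ref{zsub}(i) for $S$ follows from $|S|\le\sup(\aleph_0,|R|)$; prime elements of $R$ remain prime in $R[x]$ and, lying in $\m\cap R$, stay prime after localization; and $|S|\le\sup(\aleph_0,|R|)$—so $S$ is an $A$-extension of $R$. For Definition~\ref{zsub}(iii)—that $\height(\fq\cap S)\le1$ whenever $u\in T$ is regular and $\fq\in\ass(T/uT)$—I would argue exactly as Heitmann does in checking that $R[x]_{(\m\cap R[x])}$ remains an $N$-subring: if $\height_{R[x]}(\fq\cap R[x])\ge2$, then, using that height one primes of the UFD $R$ are principal and that $x$ is transcendental over $R$ modulo every associated prime of $T$, one is driven to a configuration incompatible with $R$ itself being an $N$-subring. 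Thus $S$ is an $N$-subring and an $A$-extension of $R$.

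It remains to build $\cal{Q}_S$, and this I expect to be the main obstacle. For $\fq\in\cal{Q}_R$, transcendence of $x$ over $R$ modulo $\fq$ gives $\fq\cap R[x]=(\fq\cap R)R[x]$, a prime extended from the height one prime $\fq\cap R$, so $\fq\cap S$ has height one; hence $\fq\mapsto\fq\cap S$ carries $\cal{Q}_R$ injectively into $\spec_1 S$. Given a height one prime $\mathfrak{p}'$ of $S$ not of this form, write $\mathfrak{p}'=gS$ with $g\in\m\cap R[x]$ prime in $R[x]$; since $g$ is a nonzero element of $S$, Definition~\ref{zsub}(ii) for $S$ makes $g$ a regular element of $T$. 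Any minimal prime $\mathfrak r$ of $gT$ has height one, and the local injection $S_{\mathfrak r\cap S}\hookrightarrow T_{\mathfrak r}$ forces $\height(\mathfrak r\cap S)\le1$, so $\mathfrak r\cap S=gS$. I would then climb from $\mathfrak r$ to a height $d$ prime $\fq_g\supseteq\mathfrak r$ of $T$ with $\fq_g\cap S=gS$: since $d\le\dim T-1$, every quotient of $T$ occurring along such a chain of length $d-1$ is a complete local domain of dimension at least two, hence by Lemma~\ref{modcard} has $|T|$ elements—strictly more than the at most $|S|$ elements of $(\m\cap S)\setminus gS$ to be avoided, each excluded by only finitely many height one primes of that quotient—so at each step the prime can be enlarged in height by one while its contraction to $S$ is held equal to $gS$. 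Choosing the $\fq_g$ distinct from one another and from $\cal{Q}_R$, and putting $\cal{Q}_S:=\cal{Q}_R\cup\{\fq_g\}\subseteq\spec_d T$, makes $f\colon\cal{Q}_S\to\spec_1 S$ a bijection with $\cal{Q}_R\subseteq\cal{Q}_S$, so $S$ is a $Z_d$-subring and an $A_+$-extension of $R$. The two places I expect to demand the most care are this climbing argument—keeping the contraction to $S$ from jumping above $gS$—and the direct verification of Definition~\ref{zsub}(iii), since the primes arising as associated primes of quotients $T/uT$ by regular $u$ form a set far too large to be handled by prime avoidance.
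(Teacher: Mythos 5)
Your overall route is the same as the paper's: adjoin a representative $x+t$ of $t+\m^2$ chosen by prime avoidance to be transcendental modulo each prime in an avoidance set $C$, localize, and then build $\cal{Q}_S$ by combining the correspondence $\fq\mapsto\fq\cap S$ on $\cal{Q}_R$ with a cardinality-and-catenarity climbing argument that attaches a height $d$ prime of $T$ to each new height one prime of $S$. That climbing step is carried out correctly and matches the paper's induction on $d$. The gap is the one you yourself flag at the end: your avoidance set $C=\ass T\cup\cal{Q}_R\cup\{P\}$ is too small, and as a result the verification of condition (iii) of Definition~\ref{zsub} is left genuinely open. Your sketch for (iii) --- that $\height(\fq\cap R[x])\ge 2$ leads to ``a configuration incompatible with $R$ being an $N$-subring'' using only transcendence of $x$ modulo the associated primes of $T$ --- does not go through: transcendence modulo $\ass T$ controls nothing about the primes $\fq\in\ass(T/uT)$ for regular $u$, and these are exactly the primes condition (iii) is about.

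The resolution, which is what the paper does, is to enlarge $C$ to include $\{\p\in\spec T\mid\p\in\ass(T/rT)\text{ with }0\neq r\in R\}$. This set is \emph{not} too large for prime avoidance: $R$ has at most $\sup(\aleph_0,|R|)$ nonzero elements and each $T/rT$ has finitely many associated primes, so $|C|\le\sup(\aleph_0,|R|)$, which is exactly the bound your coset-counting already accommodates. The case of a general regular $u\in T$ is then handled by reduction rather than avoidance: if $\p\in\ass(T/uT)$ and $\p\cap R=(0)$, then $S_{\p\cap S}$ is a localization of $k[X]$ for a field $k$, so $\height(\p\cap S)\le 1$; and if $\p\cap R=aR\neq(0)$, then $\depth(T_\p/uT_\p)=0$ forces $\depth(T_\p/aT_\p)=0$, so $\p\in\ass(T/aT)$ with $a\in R$, hence $\p\in C$, hence $\p\cap S=(\p\cap R)S=aS$ is principal in the UFD $S$ and has height at most one. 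Without the enlarged $C$ this last equality $\p\cap S=(\p\cap R)S$ is unavailable, and condition (iii) --- which you also invoke later to get $\mathfrak{r}\cap S=gS$ for a minimal prime $\mathfrak{r}$ of $gT$ in your construction of $\cal{Q}_S$ --- is not established.
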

\begin{proof}
Note that this lemma is similar to Lemma 5 of~\cite{heitmann93}. Let $C=\{\p\in\spec T\mid\p\in\ass(T/rT)\text{ with }0\neq r\in R\}\cup\ass T\cup\mathcal{Q}_R \cup \{P\}$.  Since $\depth T > 1$, we have that $\m \not\in C$, so $\m^2\not\subset \p$ for every $\p\in C$.  For $\p \in C$, let $D_{(\p)}$ be a set of coset representatives of the cosets $u + \p \in T/\p$ that make $u + t + \p$ algebraic over $R/\p \cap R$.  Define $D = \bigcup_{\p \in C} D_{(\p)}$.  Using Lemma~\ref{countavoidance} if $R$ is countable and Lemma~\ref{avoidance} otherwise, we choose $x\in\m^2$ such that $(x+t)+\p$ is transcendental over $R/(\p \cap R)$ for every $\p\in C$.  Define $S=R[x+t]_{\m\cap R[x+t]}$; we claim that $S$ is our desired $Z_d$-subring with distinguished set $\cal{Q}_S$, where we will define $\cal{Q}_S$ later.  
%Since $x + t$ is transcendental over $R$, we have that $S$ is a UFD.

We first show that $\p\cap S=(\p\cap R)S$ for any $\p\in C$. Elements of $\p\cap S$ are of the form $uf$, where $u\in S^{\times}$ is a unit and $f\in R[x+t]$. Since $u$ is a unit, we have that $f\in\p$. Treating $f$ as a polynomial in $x+t$ over $R$, each of its coefficients must be in $\p \cap R$ since $x+t+\p$ is transcendental over $R\slash\p\cap R$. Thus, $f\in(\p\cap R)R[x+t]$, and so $uf\in(\p\cap R)S$. This gives us that $\p\cap S\subseteq(\p\cap R)S$, and the opposite containment is clear, so we have equality. 
%We now verify that $S$ is a $Z_d$-subring.

Clearly, $|S|=\operatorname{sup}(\aleph_0,|R|)$ as we are simply adjoining a transcendental element and localizing, and so $S$ satisfies condition (i) of $Z_d$-subrings and condition (ii) of $A_+$-extensions.  Now let $\fq\in\ass T$; then $\fq\cap S=(\fq\cap R)S=(0)$, and so condition (ii) of $Z_d$-subrings is satisfied.  By the same argument, $S \cap P = (0)$.

Now, let $\p\in\ass (T/rT)$ for some regular $r\in T$.  First suppose $\p\cap R=(0)$. Then, in $S_{\p\cap S}=R[x+t]_{\p\cap R[x+t]}$, all nonzero elements of $R$ are units, and so $S_{\p\cap S}$ is isomorphic to $k[X]$ with additional elements inverted, where $k$ is a field and $X$ is an indeterminate.  It follows that $\dim S_{\p\cap S}\leq 1$ and we have $\height(\p\cap S)\leq 1$. Now suppose $\p\cap R=aR$ for some $a\in R$.  We know $\p\in\ass(T/rT)$ if and only if $\p T_{\p}\in\ass(T_{\p}/rT_{\p})$ (Theorem 6.2 of~\cite{matsumura86}), and so $\p T_{\p}$ consists of zerodivisors of $T_{\p}/rT_{\p}$. Then $T_{\p}/rT_{\p}$ consists only of zerodivisors and units and hence has depth zero, and since $a\in T$ is regular, $T_{\p}/aT_{\p}$ must also have depth zero.  Then $\p T_{\p}$ consists only of zerodivisors of $T_{\p}/aT_{\p}$, and so $\p T_{\p}\in\ass(T_{\p}/aT_{\p})$, and we have that $\p\in\ass(T/aT)$.  Therefore, $\p\in C$, so we have that $\p\cap S=(\p\cap R)S=aS$. Since $R$ is a UFD and $x + t$ is transcendental over $R$, $R[x+t]$ is a UFD as well, and furthermore, any localization of a UFD is a UFD, so $S=R[x+t]_{\m\cap R[x+t]}$ is a UFD.  Therefore, since $aS$ is a principal ideal of $S$, $\height(\p\cap S)\leq 1$ and $S$ satisfies condition (iii) of $Z_d$-subrings.

Note that, since $x + t$ is transcendental over $R$, prime elements of $R$ are prime in $S$.  Finally, we define the distinguished set $\cal{Q}_S$ for $S$.  We first show that there is a one-to-one correspondence between height one prime ideals of $R$ and height one prime ideals $pS$ of $S$ such that $pS \cap R \neq (0)$ given by $pR \mapsto pS$.
%check that $S$ satisfies condition (iv) of $Z_d$-subrings and we define $\cal Q_S$ so that $\cal Q_R\subseteq\cal Q_S$.
%We know from~\cite{heitmann93} that if $p$ is a prime element of $R$ then it is also a prime element of $S$.  
%Let $\fq_p\in\cal{Q}_R$. Then $\fq_p\cap R=pR$ for some prime element $p$ of $R$, and so $\fq_p\cap S=(\fq_p\cap R)S=pS$.  We will define $\cal Q_S$ to contain $\cal Q_R$, and so a height one prime ideal of $S$ that is generated by a prime element of $R$ has a corresponding element in $\cal Q_S$.  
If $pS$ is a height one prime ideal of $S$ such that $pS \cap R \neq (0)$, then there is an element $s \in S$ such that $ps \in R$.  Factoring $ps$ into primes in $R$, we get $ps = q_1\cdots q_n$ where $q_i$ are prime elements in $R$.  Since prime elements in $R$ are prime in $S$, we have that $q_1\cdots q_n$ is a prime factorization of $ps$ in $S$.  It follows that $p = q_iu$ for some $i$ where $u$ is a unit in $S$.  Hence, $pS = (q_iu)S = q_iS$, so for every height one prime ideal $pS$ of $S$ with $pS \cap R \neq (0)$, there is a prime element $q$ of $R$ such that $pS = qS$, and it follows that our map is onto.  Now suppose that $p$ and $q$ are prime elements of $R$ such that $pS = qS$.  Then, letting $P' \in \ass(T/pT)$, we have that $pS \cap R \subseteq P' \cap R = pR$.  Clearly, $pR \subseteq pS \cap R.$  This gives us that $pR = pS \cap R = qS \cap R = qR$, showing that our map is one-to-one.  Let $\fq_p\in\cal{Q}_R$, where $p$ is a prime element of $R$ satisfying $\fq_p\cap R=pR$. Then $\fq_p\cap S=(\fq_p\cap R)S=pS$.  Hence, the map from $\cal{Q}_R$ to the set of height one prime ideals $pS$ of $S$ satisfying $pS \cap R \neq (0)$ given by $\fq_p \mapsto \fq_p \cap S = pS$ is a bijection.
%We will define $\cal Q_S$ to contain $\cal Q_R$, and so a height one prime ideal of $S$ that is generated by a prime element of $R$ has a corresponding element in $\cal Q_S$.  

%This means that for every height one prime ideal of $S$ whose intersection with $R$ is not the zero ideal, there is a corresponding element in $\cal Q_R$.

Now let $p$ be a nonzero prime element of $S$ such that $pS \cap R = (0)$.
We claim that there is $\fq_p\in\spec_d T$ such that $\fq_p\cap S=pS$, and we
show this by induction on $d$.  If $d = 1$, let $\fq_p$ be a minimal prime ideal of $pT$.
In the case $d=2$, let $\p \in \spec T$ be a minimal prime ideal of $pT$, and consider the embedding
$S/\p \cap S\hookrightarrow T/\p$.
Since $T/\p $ is a Noetherian integral domain, each nonzero element of $S/\p \cap S$ is contained in
finitely many height one prime ideals of $T/\p $,
and since, by Lemma~\ref{modcard}, $|S/\p \cap S|<|T|=|T/\p  |$,
there must be some $\overline{\fq}\in\spec_1(T/\p )$ whose intersection with $S/\p \cap S$ is zero.  Lifting to $T$ and using the fact that $T$ is equidimensional and catenary, we have a height two prime ideal $\fq$ of $T$ containing $\p$
such that $\fq\cap S\subseteq\p\cap S$.
By condition (iii) of $Z_d$-subrings, $\p\cap S=pS$
and so $\fq\cap S=pS$.
Now let $2<d<\dim T$, and suppose that $\fq'\in\spec_{d-1} T$
satisfies $\fq'\cap S=pS$.
Applying the above argument to the injection $S/\fq'\cap S=S/pS\hookrightarrow T/\fq'$
we can find some $\fq_p\in\spec_d T$ containing $p$ and
satisfying $\fq_p\cap S\subseteq \fq' \cap S$,
and so $\fq_p\cap S=pS$ and the induction is complete.  Now for every height one prime ideal $pS$ of $S$ such that $pS \cap R = (0)$, choose one $\fq \in \spec_d T$ such that $\fq \cap S = pS$.
We define $\cal Q_S$ to be the union of the set of these prime ideals and $\cal Q_R$.
Then $S$ is our desired $A_+$-extension of $R$.
\end{proof}

\begin{lemma}\label{h4}
Under Assumption~\ref{assume}, let $\fa$ be a finitely generated ideal of $R$, and let $c\in\fa T\cap R$.  Let $P$  be a nonmaximal prime ideal of $T$ such that $P \cap R = (0)$.  Then there exists an $A_+$-extension $S$ of $R$ such that $c\in\fa S$ and $P \cap S = (0)$.
\end{lemma}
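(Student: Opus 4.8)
The plan is to follow the template of the proof of Lemma~\ref{cosets} (which is Heitmann's Lemma~4 of~\cite{heitmann93}, with the extra bookkeeping of the distinguished set and of the prime $P$): adjoin to $R$ generically chosen, suitably modified versions of the coefficients appearing in an expression for $c$, localize, verify that the resulting ring is a $Z_d$-subring and an $A_+$-extension of $R$ that meets $P$ in $(0)$, and finally equip it with a distinguished set exactly as in Lemma~\ref{cosets}. First I would make two reductions. Write $\fa=(a_1,\dots,a_n)R$; any generator equal to $0$ may be discarded, and if $\fa=(0)$ then $c=0$ and $S=R$ works, so assume every $a_i\neq0$. By condition~(ii) of Definition~\ref{zsub} no $a_i$ lies in any $\fq\in\ass T$, so each $a_i$ is a nonzerodivisor of $T$; fix $u_1,\dots,u_n\in T$ with $c=\sum_{i=1}^n a_iu_i$. (One may equally organize this as an induction on $n$, reducing the closure of an $n$-generated ideal to that of an $(n-1)$-generated one plus a single adjunction; the crux is the same.)

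For the construction, set $C=\{\p\in\spec T:\p\in\ass(T/rT)\text{ for some }0\neq r\in R\}\cup\ass T\cup\mathcal{Q}_R\cup\{P\}$. Writing syzygy adjustments as multiples of $a_n$, put $u_i'=u_i+a_n\delta_i$ for $i<n$ and $u_n'=u_n-\sum_{i<n}a_i\delta_i$, so that $\sum_{i=1}^n a_iu_i'=c$ for any $\delta_i\in T$; then $u_n'=(c-\sum_{i<n}a_iu_i')/a_n$ lies in the fraction field of $R[u_1',\dots,u_{n-1}']$, so $S_0:=R[u_1',\dots,u_n']$ sits between the polynomial ring $B:=R[u_1',\dots,u_{n-1}']$ and its localization $B[1/a_n]$. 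Using Lemma~\ref{countavoidance} (when $R$ is countable) or Lemma~\ref{avoidance} (otherwise) with the ideal $a_nT$ and the sub-collection $\{\p\in C:a_n\notin\p\}$ — against which $a_nT$ is contained in no member, and which includes every $\p\in C$ with $\p\cap R=(0)$ — I would choose the $\delta_i$ successively so that each $u_i'$ with $i<n$ is transcendental over $B_{i-1}:=R[u_1',\dots,u_{i-1}']$ and also over the image of $B_{i-1}$ in $T/\p$ for every $\p\in C$ with $a_n\notin\p$. Set $S=(S_0)_{\m\cap S_0}$. Then $c=\sum a_iu_i'\in\fa S$, and $S$ is quasi-local with maximal ideal $\m\cap S$; what remains is to verify the structural conditions.

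Most of those are checked just as in Lemma~\ref{cosets}. The cardinality bounds hold because we adjoined finitely many elements to $R$ and localized. Transcendence of the $u_i'$ modulo the primes of $\ass T$ and modulo $P$ gives $\fq\cap S=(0)$ for every $\fq\in\ass T$ (condition~(ii) of Definition~\ref{zsub}) and $P\cap S=(0)$; here one uses that $a_i\notin\p$ whenever $\p\cap R=(0)$, so that the modification $u_i+a_n\delta_i$ genuinely sweeps out enough cosets modulo such $\p$. That prime elements of $R$ remain prime in $S$, that $\mathcal{Q}_R\subseteq\mathcal{Q}_S$, and the construction of $\mathcal{Q}_S$ itself go through verbatim: one establishes the bijection between $\mathcal{Q}_R$ and the height one primes $pS$ of $S$ with $pS\cap R\neq(0)$, and for each remaining prime element $q$ of $S$ with $qS\cap R=(0)$ one produces $\fq_q\in\spec_d T$ with $\fq_q\cap S=qS$ by the same induction on $d$ (using Lemma~\ref{modcard} and that $T$ is equidimensional and catenary), taking $\mathcal{Q}_S$ to be $\mathcal{Q}_R$ together with these lifts.

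The step I expect to be the main obstacle is condition~(iii) of Definition~\ref{zsub}, which is entangled with showing that $S$ is a unique factorization domain. For $\fq\in\ass(T/rT)$ with $r\in T$ regular one splits on $\fq\cap R$, following Lemma~\ref{cosets}: if $\fq\cap R=(0)$ then the $u_i'$ are transcendental modulo $\fq$ by construction and $\fq\cap S=(0)$; if $\fq\cap R=aR$ then, since $a\in R$ is regular in $T$ and $\fq\in\ass(T/rT)$, the depth computation of Lemma~\ref{cosets} shows $\fq\in\ass(T/aT)$, so it suffices to treat $r=a\in R$, and one wants $\fq\cap S=aS$, a principal prime of height at most one once $S$ is a UFD. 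The genuinely new difficulty — absent from Lemma~\ref{cosets}, which adjoins a single transcendental — is the interaction of the several adjoined elements with primes $\fq$ that contain some of the $a_i$, where the modification $u_i+a_n\delta_i$ is powerless modulo $\fq$. I would meet it by exploiting the squeeze $B\subseteq S_0\subseteq B[1/a_n]$ together with the genericity of the $u_i'$, to argue that every height one prime of $S$ contracts to a principal prime of $R$ or of a polynomial ring over $R$; this would yield simultaneously the UFD property of $S$ and the height bound required by condition~(iii). This is where the weight of the argument lies, and where conditions~(ii) and~(iii) of Definition~\ref{zsub} for $R$ are used in full.
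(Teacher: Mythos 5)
Your proposal correctly identifies where the difficulty lies --- the UFD property of $S$ and condition~(iii) of Definition~\ref{zsub} for primes $\fq$ containing some of the generators --- but it does not overcome it, and the route you sketch for doing so would fail. Your ring is $S_0=B[u_n']$ with $u_n'=(c-\sum_{i<n}a_iu_i')/a_n$, i.e.\ a ring of the form $B[f/a_n]$ squeezed between the UFD $B$ and $B[1/a_n]$. A ring between a UFD and a localization of it need not be a UFD: for instance $k[x,y]\subset k[x,y,y^2/x]\subset k[x,y,1/x]$, and $k[x,y,y^2/x]\cong k[x,y,w]/(xw-y^2)$ is not factorial. So ``the squeeze together with genericity'' is not an argument; the genericity of the $\delta_i$ gives you no purchase precisely at the primes $\p\in C$ with $a_n\in\p$, where $u_i'\equiv u_i\pmod\p$ is completely uncontrolled, and these are exactly the primes on which condition~(iii) and the statement $\fq\cap S=pS$ for $\fq\in\cal Q_R$ can break. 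Relatedly, you never make the reduction (essential in Heitmann's argument and in the paper) to the case where the ideal is not contained in a height one prime of $R$; without it you cannot even guarantee that each $\p\in C$ misses at least one generator.

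The paper's proof (following Lemma~4 of~\cite{heitmann93}) is built to avoid exactly this trap. It inducts on $n$. After reducing to $\fa$ not contained in a height one prime of $R$, the $n=2$ case writes $c=a_1x_1+a_2x_2$ with $x_1=t_1+a_2t$, $x_2=t_2-a_1t$, notes that no $\p\in C$ contains both $a_1$ and $a_2$, and chooses $t$ so that $x_1+\p$ is transcendental whenever $a_2\notin\p$ \emph{and} $x_2+\p$ is transcendental whenever $a_1\notin\p$; the ring is then taken to be the localization of $\mathring{R}=R[a_2^{-1},x_1]\cap R[a_1^{-1},x_2]$, an intersection of two localized polynomial rings in \emph{one} variable each, so that every $\p\in C$ is controlled through at least one of the two descriptions. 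This symmetric intersection is what makes $S$ an $N$-subring (hence a UFD with the right height behavior); your one-sided modification $u_i'=u_i+a_n\delta_i$ has no analogue of it. For $n>2$ the paper does not adjoin $n$ elements at once: it adjoins a single transcendental $\tilde t=t_n+\sum a_iu_i$, sets $c^*=c-\tilde t a_n\in\fb T$ with $\fb=(a_1,\dots,a_{n-1})$, and invokes the inductive hypothesis, with a separate branch (factoring out the gcd of $a_1,\dots,a_{n-1}$ and first applying the $n=2$ case to $(r,a_n)$) when $\fb$ lies in a height one prime. You should restructure your argument along these lines; as written, the core of the lemma is asserted rather than proved.
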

\begin{proof}
Note that this lemma is similar to Lemma 4 in~\cite{heitmann93} with the change that, instead of $R$ and $S$ being merely $N$-subrings, $R$ and $S$ are $Z_d$-subrings of $T$ with distinguished sets $\cal{Q}_R$ and $\cal{Q}_S$ satisfying $\cal Q_R\subseteq\cal Q_S$.  Therefore, most of the proof follows the proof of Lemma 4 in~\cite{heitmann93}, and we need only show that condition (iv) of Definition~\ref{zsub} holds for the $S$ we construct, that we can choose $\cal Q_S$ so that $\cal Q_R\subseteq\cal Q_S$, and that $P \cap S = (0)$.  We will proceed by inducting on the number of generators of $\fa$.  Let $\fa=(a_1, \ldots ,a_n)R$.

It is shown in the proof of Lemma 4 of~\cite{heitmann93} that, without loss of generality, we may assume that $\fa$ is not contained in a height one prime ideal of $R$, and so we will assume this for the rest of our proof.  If $n = 1$ this implies that $\fa = R$, so $S=R$ is the desired $A_+$-extension of $R$.
%$N$-subring.  Since $R$ is a $Z_d$-subring, it is clear that $S=R$ is also a $Z_d$-subring, that $P \cap S = (0)$, and we can choose $\cal Q_S=\cal Q_R$.
%Additionally, it is trivial that if $p\in R$ is prime and $\fq\cap R=pR$ for $\fq_p\in\spec_d T$, then $\fq_p\cap S=\fq_p\cap R=pR=pS$.  Therefore, $S=R$ is the desired subring when $n=1$.

As in the proof of Lemma~\ref{cosets}, we define $C=\{\p\in\spec T\mid\p\in\ass(T/rT)\text{ with }0\neq r\in R\}\cup\ass T\cup\cal{Q}_R \cap \{P\}$.

Now, we consider the case $n=2$. 
Then $c=a_1t_1+a_2t_2$ for some $t_1,t_2\in T$, and
so $c=(t_1+a_2t)a_1+(t_2-a_1t)a_2$ for any $t\in T$. 
Following the proof of Lemma 4 in~\cite{heitmann93},
let $x_1=t_1+a_2t$ and $x_2=t_2-a_1t$ for some $t$ to be carefully chosen later.
We claim that if $\p\in C$, then at most one of $a_1$ or $a_2$ is contained in $\p$.
This is clear if $\p\cap R=(0)$, and if $\height(\p\cap R)=1$,
then $\p\cap R=pR$ for some prime element $p\in R$.
If $a_1$ and $a_2$ are both in $\p$ then $\fa \subset \p\cap R=pR$,
contradicting that $\fa$ is not contained in a height one prime ideal of $R$.  So we can assume that no $\p\in C$ contains both $a_1$ and $a_2$.
%then letting $a_1'$ and $a_2'$ denote $a_1$ and $a_2$ with the highest common power of $p$ factored out,
%and at least one of $a_1'$ or $a_2'$ is not contained in $\p$.
%In order for $S$ to contain $c$ it is enough
%to have $a_1't_1+a_2't_2\in S$,
%so we can assume that no $\p\in C$ contains both $a_1$ and $a_2$.
Define $\mathring{R}=R[a_2^{-1},x_1]\cap R[a_1^{-1},x_2]$ and $S=\mathring{R}_{\m\cap\mathring{R}}$.

\begin{comment}
Now, we consider the case $n=2$.  Then $c=a_1t_1+a_2t_2$ for some $t_1,t_2\in T$, so $c=(t_1+a_2t)a_1+(t_2-a_1t)a_2$ for any $t\in T$.  As in the proof of Lemma 4 in~\cite{heitmann93}, let $x_1=t_1+a_2t, x_2=t_2-a_1t$ for some $t$ to be carefully chosen later,
and  let $\p\in\spec T$ such that $\height(\p\cap R)=1$;
then $\p\cap R=pR$ for some prime element $p\in R$. If $a_1,a_2\in\p\cap R=pR$, then let $a_1'$ and $a_2'$ denote $a_1$ and $a_2$ with the highest power of $p$ factored out. Then it suffices for $S$ to contain $a_1't_1+a_2't_2$, and at least one of $a_1'$ or $a_2'$ is not contained in $\p$.
Thus we can assume at least one of $a_1$ or $a_2$ is not in $\p$,
so without loss of generality let $a_1\not\in\p$.
Define $\mathring{R}=R[a_2^{-1},x_1]\cap R[a_1^{-1},x_2]$.
\end{comment}

If $\p\in C$, then $|R/\p\cap R|\leq|R|$ and so its algebraic closure in $T/\p$ has cardinality at most $|R|$ if $R$ is infinite and will be countable if $R$ is finite.  Now let $\p \in C$ and suppose $a_2 \notin \p$.  Then each choice of $t$ modulo $\p$ gives a different $x_1$ modulo $\p$, and so for all but $\max(\aleph_0,|R|)$ choices of $t$ modulo $\p$, the element $x_1 + \p$ of $T/\p$ will be transcendental over $R/\p\cap R$.
For each $\p\in C$ with $a_2\not\in\p$,
let $D^1_{\p}\subset T$ be a full set of coset representatives for those choices of $t$
such that $x_1+\p$ is algebraic over $R/\p\cap R$.
Similarly, for each $\p\in C$ that does not contain $a_1$
let $D^2_{\p}$ be a full set of coset representatives
for the elements $t$ that make $x_2+\p$ algebraic over $R/\p\cap R$,
and define $D=\bigcup\{D^1_{\p}\cup D^2_{\p}\mid\p\in C\}$.
Then $|C\times D|\leq \max(\aleph_0,|R|)$ and so using Lemma~\ref{countavoidance}
if $R$ is countable and Lemma~\ref{avoidance} otherwise, we can choose $t$ so that, for $\p \in C$, we have 
$x_1+\p$ is transcendental over $R/\p \cap R$ if $a_2 \not\in \p$ and $x_2+\p$ is transcendental over $R/\p\cap R$ if $a_1 \not\in \p$.  It is shown in Lemma 4 of~\cite{heitmann93} that for such a choice of $t$, $S$ is an N-subring and prime elements in $R$ are prime in $S$.

Suppose $f \in P \cap \mathring{R}$.  Then $f \in R[a_1^{-1},x_2]$, and so there is a positive integer $r$ such that $a_1^rf \in R[x_2]$.  By the way $t$ was chosen, $x_2 + P$ is transcendental over $R/R \cap P$ and so the coefficients of $a_1^rf$ are in $R \cap P = (0)$.  As $a_1$ is not a zerodivisor, we have that the coefficients of $f$ are all $0$.  It follows that $P \cap \mathring{R} = (0)$ and so we have $P \cap S = (0)$.

%Heitmann proves that each height one prime ideal of $R$ extends uniquely
%to a height one prime ideal of $\mathring{R}$ and therefore of $S$,
%and this new prime ideal is principally generated by the same element.

Now suppose $\fq \in \cal Q_R$.  If $a_1 \notin \fq$ then, by the way $t$ was chosen, $x_2 +\fq$ is transcendental over $R/\fq\cap R$.  Let $f \in \fq \cap \mathring{R}$.
Viewing $f$ as an element of $R[x_2,a_1^{-1}]$, for sufficiently large $k$ we have that
$(a_1)^k f \in R[x_2]$.  It follows that the coefficients of $(a_1)^k f$ are in $\fq \cap R = pR$ for some prime element $p$ of $R$.  Hence we have $f \in pR[x_2,a_1^{-1}]$.  Similarly, if $a_2 \not\in \fq$, then $f \in pR[x_1,a_2^{-1}]$.  If $a_2 \in \fq$ then $a_2 \in R \cap \fq = pR$, and so $p$ is a unit in $R[x_1,a_2^{-1}]$ and we have $pR[x_1,a_2^{-1}] = R[x_1,a_2^{-1}]$.  In any case, $f \in pR[x_2,a_1^{-1}] \cap p R[x_1,a_2^{-1}] = p\mathring{R}$, and it follows that $\fq \cap \mathring{R} = p\mathring{R}$ and so $\fq \cap S = pS$.  We will define $\cal Q_S$ to contain $\cal Q_R$; then each height one prime ideal of $S$ generated by a prime element of $R$ will have a corresponding element in $\cal Q_S$.
%$pR[x_1,a_2^{-1}] = R[x_1,a_2^{-1}]$ and so $f \in pR[x_1,a_2^{-1}]$.
%Therefore, for each prime $p\in R$, the original $\fq_p$ chosen such that $\fq_p\cap R=pR$ will satisfy $\fq_p\cap S=pS$; hence $\cal{Q}_R\subseteq\cal{Q}_S$.
For every height one prime ideal of $S$ whose intersection with $R$ is the zero ideal, we use the same procedure as in Lemma~\ref{cosets}
to choose an appropriate corresponding $\fq_p \in \spec_d T$.
We define $\cal Q_S$ to be the union of the set of these prime ideals of $T$ with $\cal Q_R$.
Then $S$ is our desired $A_+$-extension of $R$ and the $n=2$ case is shown.

For $n>2$, we construct an $A_+$-extension $R''$ of $R$ with $R\subseteq R''\subset T$, $P \cap R'' = (0)$, and such that there exists $c^*\in R''$ and an $(n-1)$ generated ideal $\fb$ of $R''$ with $c^*\in\fb T$.  By induction, there exists an $A_+$-extension $S$ of $R''$ such that $c^* \in \fb S$ and $P \cap S = (0)$.  We then show that $c \in \fa S$, which will complete the proof. 

Let $\fa=(a_1, \ldots ,a_n)R$ and let $\fb=(a_1,\ldots,a_{n-1})R$.
We first assume that $\fb$ is not contained in a height one prime ideal of $R$.
Since $c\in\fa T\cap R$ we can write $c=\sum a_it_i$ where $t_i\in T$.
We define $\tilde{t}=t_n+\sum_{i=1}^{n-1}a_iu_i$ where $u_i\in T$ will be chosen later, 
and we define $c^*=c-\tilde{t}a_n$.  Note that $c^* \in \fb T$ as desired.  Now we work to define $R''$ so that it is an $A_+$-extension of $R$, $c^* \in R''$, and $P \cap R'' = (0)$.
Since we are assuming that $\fb$ is not contained in a height one prime ideal of $R$, we cannot have a $\p \in C$ that contains all of $a_1, a_2, \ldots , a_{n - 1}$.  Now, as in the $n = 2$ case, choose $u_1 \in T$ so that $t_n + a_1u_1$ is transcendental over $R/ \p \cap R$ for all $\p \in C$ satisfying $a_1 \not\in \p$.
%We will choose $\{u_i\}$ so that $\tilde{t}+\p$
%is transcendental over $R/\p\cap R$ whenever $\p\in C$.
%We must choose $\{u_i\}$ so that the image of $\tilde{t}$ in $T/\p$
%is transcendental over $R/\p\cap R$ whenever $\p\in C$.
%Ignoring those primes which contain $a_1$ and following the $n=2$ procedure,
%we select $u_1$ so that $t_n+a_1u_1$ will be transcendental over $R/\p\cap R$
%for all other $\p\in C$.
We then repeat the process and choose $u_2$ so that
$t_n+a_1u_1+a_2u_2$ is transcendental over $R/\p\cap R$ whenever $\p\in C$ and $a_2\not\in\p$.
Now if $a_2\in\p$ and $a_1\not\in\p$, then $a_2u_2\in\p$
and $t_n+a_1u_1+a_2u_2+\p$ is transcendental over $R/\p\cap R$, and
so transcendental elements will remain transcendental as additional terms are added.
Continuing the process until all $u_i$ have been chosen,
 we obtain $\tilde{t}$ so that $\tilde{t} + \p$ is transcendental over $R/\p \cap R$ for all $\p \in C$.
We then define $R''=R[\tilde{t}]_{\m\cap R[\tilde{t}]}$, and we note that $c^* \in R''$.  As in the $n = 2$ case, we have that $P \cap R'' = (0)$.
Following the proof of Lemma~\ref{cosets},
$R''$ can be shown to be an $A_+$-extension of $R$.  By induction, there exists an $A_+$-extension $S$ of $R''$ such that $c^* \in \fb S$ and $P \cap S = (0)$.  Since $c = c^* + \tilde{t} a_n$, we have that $c \in \fa S$ as desired.

Now suppose that $\fb$ is contained in a height one prime ideal of $R$.
By factoring out common divisors 
we obtain an element $r\in R$ and an ideal $\fb^* = (z_1, \ldots, z_{n - 1})$ of $R$
such that $\fb=r\fb^*$, $a_i = rz_i$ for all $i = 1,2, \ldots, n-1$, and $\fb^*$ is not contained in a height one prime ideal of $R$.  Define $w = \sum_{i = 1}^{n - 1} t_iz_i$.  Then $c = rw + t_na_n$.  We now use the $n = 2$ case with $\fa = (r,a_n)$ to find an $A_+$-extension $R'''$ of $R$ such that $P \cap R''' = (0)$ and $c = v_1a_n + v_2r$ with $v_1$ and $v_2$ in $R'''$.  Note that $\fb^*R'''$ is not contained in a height one prime ideal of $R'''$, so we can use the previous case with $\fb = \fb ^*R'''$, $\fa = (z_1, z_2, \ldots ,z_{n - 1},a_n)R'''$ and $c = v_2$ to get our $A_+$-extension $R''$ of $R$, and our element $c^* \in \fb^* T$.  By induction, we get $S$.  We need only show that $c \in \fa S$.  In our case, $v_2 = w - \tilde{t}a_n$, and so $v_2 \in \fb^* S + a_nS$, and so we have $v_2r \in \fb S + ra_nS$.  It follows that $c = v_1a_n + v_2r \in \fa S$ as desired.
%We first use the above process with $\fb=\fb^*$ to find an $A_+$-extension $R'$ of $R$
%with $c\in\fb^*R'$.
%If no element of $C\cup\cal Q_{R'}$ contains
%$(a_n,r)$, we can then apply the $n=2$ case to obtain an $A_+$-extension $S$ such that $c\in(a_n,r)S$.
%If $(a_n,r)$ is contained in some $\p\in C\cup\cal Q_{R'}$, then as before we can reduce to the case where
%$(a_n,r)\not\subset\p$ by factoring out common divisors.
%Then $c\in\fb S$, so the induction is complete.
%\textbf{I think the accuracy of this needs to be double checked}
\end{proof}

Recall that, at the end of our construction, we want $\alpha(A,(0))= t$ and $\alpha(A,pA) = d - 1$ for all prime elements $p$ of $A$.  We will construct $A$ to be a UFD and to satisfy condition (iv) of $Z_d$-subrings, and so $\alpha(A,pA)\geq d-1$ for all prime elements $p$ of $A$.  For the case where $d - 1 = t$, we will construct $A$ so that $\alpha (A,(0)) \leq d - 1$.  In order to ensure this, we simply need $A$ to contain a nonzero element of every height $d$ prime ideal of $T$.  The following lemma allows us to adjoin nonzero elements of the height $d$ prime ideals of $T$ to $A$. Then, by Matsumura's inequality, we have $\alpha(A,(0)) \geq \alpha(A,pA)$ for all prime elements $p$ of $A$, and so we get that $d - 1 \geq \alpha(A,(0)) \geq \alpha(A,pA) \geq d - 1$ and we can conclude that $\alpha(A,(0))=\alpha(A,pA) = d - 1$ as desired.  For the case $d - 1 \neq t$, we construct $A$ so that there is a prime ideal $P$ of $T$ of height $t$ such that $P \cap A = (0)$, so that $\alpha(A,(0)) \geq t$.  In addition, if $\fq$ is a prime ideal of $T$ whose height is greater than $d$ and $\fq \not\subseteq P$, we use Lemma \ref{diffheights} to construct $A$ so that ht$(\fq \cap A) > 1$.  This ensures that $\alpha(A,(0)) \leq t$ and that, if $\p$ is a height one prime ideal of $A$, then $\alpha(A,\p) \leq d - 1$.

\begin{lemma}\label{dropgeneric}
Under Assumption~\ref{assume}, let $(T,\m)$ be such that, for all $\p\in\ass T$, $\height\p\leq d-1$, and suppose that $T$ satisfies the condition that if $z$ is a regular element of $T$ and $Q \in \ass(T/zT)$, then $\height Q \leq d$.  Let $P$  be a nonmaximal prime ideal of $T$ such that $P \cap R = (0)$, and let $\fq \in \spec_r T$ with $d \leq r \leq \dim T$ and such that $\fq \not\subseteq P$, and $\fq\cap R=(0)$.  Then there exists an $A_+$-extension $S$ of $R$ such that  $S \cap P = (0)$ and $\fq\cap S\neq (0)$.
\end{lemma}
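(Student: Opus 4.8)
\emph{Proof proposal.} The plan is to adjoin to $R$ a single carefully chosen nonzero element $x\in\fq$ and to set $S=R[x]_{\m\cap R[x]}$, following closely the template of the proof of Lemma~\ref{cosets} (and ultimately Heitmann's $N$-subring construction). Adjoining an element of $\fq$ immediately forces $\fq\cap S\neq(0)$, so the real work is to choose $x$ so that $S$ is again a $Z_d$-subring, is an $A_+$-extension of $R$, and still satisfies $P\cap S=(0)$. As in Lemmas~\ref{cosets} and~\ref{h4}, put
\[
C=\{\p\in\spec T\mid\p\in\ass(T/rT)\text{ for some }0\neq r\in R\}\cup\ass T\cup\cal{Q}_R\cup\{P\}.
\]
By condition (ii) of Definition~\ref{zsub} every nonzero element of $R$ is regular in $T$, so the extra hypotheses on $T$ force the primes in $\ass T$, the primes in $\ass(T/rT)$ for $0\neq r\in R$, and the primes in $\cal{Q}_R$ all to be nonmaximal; since $P$ is nonmaximal by hypothesis, $\m\notin C$, and as in Lemma~\ref{cosets} one has $|C|\leq\sup(\aleph_0,|R|)$.

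The one genuinely new point is that $\fq$ itself may lie in $C$; this happens precisely when $\fq\in\ass(T/zT)$ for some nonzero $z\in R$, and then the hypothesis on such associated primes forces $\height\fq=d$ (since $\height\fq\geq d$). Since we cannot make $x+\fq$ transcendental over $R/(\fq\cap R)=R$ while keeping $x\in\fq$, we instead work with $C':=C\setminus\{\fq\}$, and first check that $\fq\not\subseteq\p$ for every $\p\in C'$: if $\p\in\ass T$ then $\height\p\leq d-1<d\leq\height\fq$; if $\p\in\ass(T/rT)$ for some $0\neq r\in R$ then $\height\p\leq d$, so $\fq\subseteq\p$ would force $\fq=\p$, contrary to $\p\in C'$; if $\p\in\cal{Q}_R$ then $\p\cap R\in\spec_1 R$ while $\fq\cap R=(0)$, so $\fq\neq\p$ and hence, by comparing heights, $\fq\not\subseteq\p$; and $\fq\not\subseteq P$ by hypothesis. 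Now, exactly as in Lemma~\ref{cosets}, for each $\p\in C'$ let $D_\p\subset T$ be a full set of coset representatives of the elements of $T/\p$ algebraic over $R/(\p\cap R)$, put $D=\bigcup_{\p\in C'}D_\p$, note $|C'\times D|\leq\sup(\aleph_0,|R|)$, and apply Lemma~\ref{countavoidance} when $R$ is countable and Lemma~\ref{avoidance} otherwise (condition (i) of Definition~\ref{zsub} giving $|C'\times D|<|T/\m|$ in the latter case) to produce $x\in\fq$ with $x\notin\bigcup\{a+\p\mid a\in D,\ \p\in C'\}$; thus $x+\p$ is transcendental over $R/(\p\cap R)$ for every $\p\in C'$. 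Choosing $\p\in\ass T$ (for which $\p\cap R=(0)$ by condition (ii) of Definition~\ref{zsub}) shows that $x\neq0$ and that $x$ is transcendental over $R$.

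Now put $S=R[x]_{\m\cap R[x]}$. Since $x$ is transcendental over the UFD $R$, $R[x]$, and hence $S$, is a UFD, prime elements of $R$ remain prime in $S$ (as $R[x]/pR[x]\cong(R/pR)[x]$ is a domain), and $|S|=\sup(\aleph_0,|R|)$; this gives condition (i) of Definition~\ref{zsub} and condition (ii) of $A_+$-extensions. For each $\p\in C'$ the standard argument (an element of $\p\cap S$ is a unit times a polynomial in $x$ whose coefficients, by transcendence, lie in $\p\cap R$) yields $\p\cap S=(\p\cap R)S$; applying this with $\p\in\ass T$ and with $\p=P$ gives condition (ii) of Definition~\ref{zsub} and $P\cap S=(0)$. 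For condition (iii), let $z'\in T$ be regular with $\p'\in\ass(T/z'T)$. If $\p'\cap R=(0)$ --- which in particular covers $\p'=\fq$, since $\fq\cap R=(0)$ --- then $S_{\p'\cap S}=R[x]_{\p'\cap R[x]}$ is a localization of $\operatorname{Frac}(R)[x]$ and so has dimension at most $1$, whence $\height(\p'\cap S)\leq1$; if $\p'\cap R=aR$ with $0\neq a\in R$, then the localization argument of Lemma~\ref{cosets} shows $\p'\in\ass(T/aT)$, so $\p'\in C$, and since $\fq\cap R=(0)\neq aR$ we have $\p'\neq\fq$, hence $\p'\in C'$ and $\p'\cap S=aS$, principal in the UFD $S$ and so of height at most $1$. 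Finally $x\in\fq\cap S$ with $x\neq0$, so $\fq\cap S\neq(0)$.

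It remains to construct the distinguished set $\cal{Q}_S\supseteq\cal{Q}_R$ and to verify condition (iv). Note first that $\fq\cap R[x]=xR[x]$: if $f=\sum_{i=0}^{m}a_ix^i\in\fq\cap R[x]$ with $a_i\in R$, then $\sum_{i\geq1}a_ix^i\in xR[x]\subseteq\fq$, so the constant term $a_0$ lies in $\fq\cap R=(0)$ and $f\in xR[x]$; consequently $\fq\cap S=xS$ is a height one prime of $S$ with $xS\cap R=(0)$. Exactly as in Lemma~\ref{cosets}, every height one prime of $S$ is either $pS$ with $p$ a prime element of $R$, or $pS$ with $p$ a prime element of $S$ and $pS\cap R=(0)$; in the first case the element $\fq_p\in\cal{Q}_R$ with $\fq_p\cap R=pR$ satisfies $\fq_p\cap S=(\fq_p\cap R)S=pS$ (here $\fq_p\in C'$ since $\fq_p\cap R\neq(0)$), and in the second case the induction-on-$d$ procedure from Lemma~\ref{cosets} selects some $\fq_p\in\spec_d T$ with $\fq_p\cap S=pS$ (for $p$ an associate of $x$ one may simply take a height $d$ prime of $T$ lying below $\fq$ and containing $x$, which exists because $T$ is catenary and equidimensional). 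Setting $\cal{Q}_S$ equal to the union of $\cal{Q}_R$ with the primes chosen in the second case, the map $f\colon\cal{Q}_S\to\spec_1 S$, $f(\fq')=\fq'\cap S$, is a bijection, $\cal{Q}_R\subseteq\cal{Q}_S\subseteq\spec_d T$, and $S$ is the desired $A_+$-extension of $R$. I expect the main obstacle to be exactly the case $\fq\in\ass(T/zT)$ with $z\in R$, where transcendence at $\fq$ is impossible; the resolution is to drop $\fq$ from $C$ and instead use the identity $\fq\cap R[x]=xR[x]$ (valid because $\fq\cap R=(0)$) together with the fact that condition (iii) at a prime contracting to $(0)$ in $R$ follows from the ``localization of a PID'' argument rather than from membership in $C$.
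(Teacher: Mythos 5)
Your proof is correct and follows essentially the same route as the paper's, which simply chooses $x\in\fq$ with $x+\p$ transcendental over $R/(\p\cap R)$ for every $\p\in C$ and sets $S=R[x]_{\m\cap R[x]}$, deferring all verifications to Lemma~\ref{cosets}. The one place you diverge---excising $\fq$ from $C$---is vacuous: any $\p\in\ass(T/zT)$ contains $z$, so the hypothesis $\fq\cap R=(0)$ already rules out $\fq\in\ass(T/zT)$ for $0\neq z\in R$ and hence $\fq\notin C$, which is why the paper can assert $\fq\not\subseteq\p$ for all $\p\in C$ outright.
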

\begin{proof}
Let $C$ be the same set detailed in the proof of Lemma~\ref{cosets}.  Our hypotheses give that $\fq \not\subseteq \p$ for all $\p \in C$.  Using Lemma~\ref{countavoidance} or Lemma~\ref{avoidance}, choose $x\in\fq$ such that $x+\p$ is transcendental over $R/\p\cap R$ for every $\p\in C$.  We let $S=R[x]_{\m\cap R[x]}$.  As in the proof of Lemma~\ref{cosets}, $S$ will be our desired $A_+$-extension of $R$.
\end{proof}

The following lemma allows us to control the formal fibers at height one prime ideals.  We want to make sure that there are no prime ideals of $T$ with height greater than $d$ in the formal fiber of a height one prime ideal of our final ring.  We do this in the following lemma by adjoining elements of prime ideals of $T$ of height greater than $d$.

\begin{lemma}\label{diffheights}
Under Assumption~\ref{assume}, let $(T,\m)$ be such that, for all $\p\in\ass T$, $\height\p\leq d-1$, and suppose that $T$ satisfies the condition that if $z$ is a regular element of $T$ and $Q \in \ass(T/zT)$, then $\height Q \leq d$.  Let $P$  be a nonmaximal prime ideal of $T$ such that $P \cap R = (0)$.  Let $\fq\in\spec_{r} T$ with $d + 1 \leq r \leq \dim T$, $\fq \not\subseteq P$, and $\height(\fq \cap R) \leq 1$.  Then there exists an $A_+$-extension $S$ of $R$ such that $P \cap S = (0)$ and $\height(\fq \cap S) > 1$. 
\end{lemma}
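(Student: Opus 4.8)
The plan is to build $S$ by adjoining to $R$ a single well-chosen element $x$ of $\fq$ and localizing, just as in Lemma~\ref{cosets}, with $x$ chosen to lie \emph{outside} a suitable principal ideal so that $\fq\cap S$ acquires a chain of primes of length two. The one subtlety is that this forces a preliminary reduction: if $\fq\cap R=(0)$ and $x$ is transcendental over $R$, then $\fq\cap R[x]$ lies over $(0)$ in the polynomial ring $R[x]$ and hence has height at most one, so no single adjunction can make $\height(\fq\cap S)>1$. Thus, when $\fq\cap R=(0)$, I would first apply Lemma~\ref{dropgeneric} (its hypotheses hold since $d\le r$, $\fq\not\subseteq P$, and $\fq\cap R=(0)$) to obtain an $A_+$-extension $R_1$ of $R$ with $P\cap R_1=(0)$ and $\fq\cap R_1\ne(0)$; if already $\height(\fq\cap R_1)>1$ we are done, and otherwise we pick a prime element $p$ of the UFD $R_1$ lying in the prime ideal $\fq\cap R_1$, whence $\fq\cap R_1=pR_1$. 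After relabeling (and using that $A_+$-extensions compose), we may therefore assume from the start that $\fq\cap R=pR$ for a prime element $p$ of $R$.

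By condition (ii) of $Z_d$-subrings, $p$ avoids every $\fq'\in\ass T$, so $p$ is regular in $T$; by the hypothesis on $\ass(T/zT)$ for regular $z$, the (finitely many) minimal primes of $pT$ all have height at most $d$, and since $\height\fq\ge d+1$ none of them contains $\fq$. Let $C$ be the set used in the proof of Lemma~\ref{cosets}. Using that every nonzero element of $R$ is regular in $T$ (again by (ii)), together with the hypotheses $\height\p\le d-1$ for $\p\in\ass T$ and $\height Q\le d$ for $Q\in\ass(T/zT)$ with $z$ regular, the fact that $\cal Q_R\subseteq\spec_d T$, and $\fq\not\subseteq P$, I would check that $\height\fq\ge d+1$ strictly exceeds $\height\p$ for every $\p\in C$, so that $\fq\not\subseteq\p$ for all $\p\in C$. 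Then Lemma~\ref{countavoidance} if $R$ is countable and Lemma~\ref{avoidance} otherwise, applied to $C$ together with the minimal primes of $pT$, lets me choose $x\in\fq$ with $x+\p$ transcendental over $R/(\p\cap R)$ for every $\p\in C$ and with $x\notin pT$. I then set $S=R[x]_{\m\cap R[x]}$.

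It remains to verify two things. First, $S$ is a $Z_d$-subring of $T$ that is an $A_+$-extension of $R$ with $P\cap S=(0)$: this follows exactly as in the proof of Lemma~\ref{cosets}, since the construction of $S$ and the choice of $C$ are the same there (in particular $P\in C$ gives $P\cap S=(P\cap R)S=(0)$, and the distinguished set $\cal Q_S$ is produced by the same bijection-plus-induction-on-$d$ argument). Second, $\height(\fq\cap S)>1$: since $p\in\fq\cap S$ we have $pS\subseteq\fq\cap S$, and $pS$ is a nonzero principal prime ideal of the UFD $S$ (prime elements of $R$ stay prime in $S$ by condition (i) of $A_+$-extensions), hence of height one; and $x\in\fq\cap S$ with $x\notin pT\supseteq pS$, so $(0)\subsetneq pS\subsetneq\fq\cap S$ and $\height(\fq\cap S)\ge2$. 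The main obstacle is recognizing the need for the preliminary step when $\fq\cap R=(0)$; the one essential use of the hypothesis $r\ge d+1$ is the strict inequality $\height\fq>d$, which is exactly what makes it possible to choose $x$ inside $\fq$ yet outside $pT$.
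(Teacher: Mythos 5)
Your proposal is correct and follows essentially the same route as the paper: reduce via Lemma~\ref{dropgeneric} to the case $\fq\cap R=pR$, adjoin a transcendental $x\in\fq$ using the avoidance lemmas applied to the set $C$, and exhibit the chain $(0)\subsetneq pS\subsetneq\fq\cap S$. The only (harmless) variation is how you certify $x\notin pS$: the paper observes that $x\notin\fq_p$ for the distinguished prime $\fq_p\in\cal{Q}_{S_0}$ satisfying $\fq_p\cap S=pS$, whereas you arrange $x\notin pT$ by avoiding the minimal primes of $pT$ --- primes that already lie in $C$, since they belong to $\ass(T/pT)$ and have height at most $d<r$.
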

\begin{proof}
%Let $C=\{\p\in\spec T\mid\p\in\ass(T/rT)\text{ with }0\neq r\in R\}\cup\ass T\cup\cal{Q}_R\cup pT$, and let
%$\fq$ be a prime ideal of $T$ not contained in any element of $C$. We first consider the case when $\fq\cap R=pR$.
%Then let $x\in\fq$ be an element such that $x+\p$ is transcendental over $R/\p\cap R$ for every $\p\in C$.
%\textbf{Say something about how this $x$ exists since $\fq$ has to contain transcendental elements but $pT$ can't because it's contained in elements of $\cal Q_R$}, and define $S=R[x]_{\m\cap R[x]}$; the proof of Lemma~\ref{cosets} shows that $S$ is an $A_+$-extension of $R$.
First suppose $\fq\cap R=(0)$. Then use Lemma~\ref{dropgeneric} to find an $A_+$-extension $S_0$ such that $\fq\cap S_0\neq(0)$ and $P \cap S_0 = (0)$.  Now, $\height(\fq\cap S_0)\geq 1$.  If $\height(\fq\cap S_0)>1$, then $S = S_0$ and we are done.  So consider the case where $\height(\fq\cap S_0)=1$.  Then, since $S_0$ is a UFD, $\fq\cap S_0=pS_0$ for some prime element $p\in S_0$.  Let $\fq_p$ be the element of $\cal{Q}_{S_0}$ corresponding to $pS_0$.  Now we let $C=\{\p\in\spec T\mid\p\in\ass(T/rT)\text{ with } 0\neq r\in S_0\}\cup\ass T\cup\cal{Q}_{S_0}\cup \{P\}$ and, as before, adjoin some element $x\in\fq$ such that $x+\p$ is transcendental over $S_0/\p\cap S_0$ for every $\p\in C$.  We let $S=S_0[x]_{\m\cap S_0[x]}$.  Then $S$ is an $A_+$-extension of $S_0$, which is an $A_+$-extension of $R$, and so $S$ is an $A_+$-extension of $R$.  Since $P \in C$, we have $P \cap S = (0)$.  Note that $pS \subseteq \fq \cap S$ and that $pS$ is a height one prime ideal of $S$.  Clearly, $x \in \fq \cap S$.  If $x \in pS$ then $x \in \fq_p$, contradicting that $x+\fq_p$ is transcendental over $S_0/\fq_p\cap S_0$.  It follows that $pS$ is strictly contained in $\fq \cap S$ and so ht$(\fq \cap S) > 1$.
%\textbf{finish this proof}
\end{proof}

\begin{lemma} \label{doubleext}
Under Assumption~\ref{assume}, let $(T,\m)$ be such that $\depth T > 1$ and, for all $\p\in\ass T$, $\height\p\leq d-1$, and suppose that $T$ satisfies the condition that if $z$ is a regular element of $T$ and $Q \in \ass(T/zT)$, then $\height Q \leq d$. Let $P$  be a nonmaximal prime ideal of $T$ such that $P \cap R = (0)$, and let $t\in T$.  If $\fq\in\spec_dT$ with $\fq \not\subseteq P$, then there exists an $A_+$-extension S of R such that $t+\m^2\in\image(S\rightarrow T/\m^2)$, $P \cap S = (0)$, and $\fq\cap S\neq (0)$.  If $\fq\in\spec_{r}T$ with $d + 1 \leq r \leq \dim T$, and $\fq \not\subseteq P$, then there exists an $A_+$-extension S of R such that $t+\m^2\in\image(S\rightarrow T/\m^2)$, $P \cap S = (0)$, and ht$(\fq\cap S) > 1$.
\end{lemma}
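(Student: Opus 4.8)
The plan is to produce $S$ by chaining together the extensions furnished by Lemmas~\ref{cosets}, \ref{dropgeneric}, and~\ref{diffheights}. The glue is the transitivity of the $A_+$-extension relation: if $R\subseteq R_1\subseteq S$ with $R_1$ an $A_+$-extension of $R$ and $S$ an $A_+$-extension of $R_1$, then $S$ is an $A_+$-extension of $R$, since each of the three defining conditions --- prime elements staying prime, the cardinality bound $|S|\leq\sup(\aleph_0,|R|)$, and the inclusion of distinguished sets --- is transitive. Two elementary bookkeeping facts let the chaining go through: once $t+\m^2\in\image(R_1\to T/\m^2)$ for an intermediate ring $R_1$, the same holds for every ring containing $R_1$; and if $P\cap R_1=(0)$, then $P$ is still a legitimate input (a nonmaximal prime of $T$ meeting the subring in $(0)$) for the next lemma in the chain.

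First I would apply Lemma~\ref{cosets} to $R$, $t$, and $P$ --- its depth hypothesis holds since $\depth T>1$ --- obtaining an $A_+$-extension $R_1$ of $R$ with $t+\m^2\in\image(R_1\to T/\m^2)$ and $P\cap R_1=(0)$. Since $R_1$ is again a $Z_d$-subring and the height conditions on $\ass T$ and on $\ass(T/zT)$ carry over, the remaining lemmas apply to $R_1$. In the case $\fq\in\spec_d T$: if $\fq\cap R_1\neq(0)$, take $S=R_1$; otherwise $\fq\cap R_1=(0)$ and $\fq\not\subseteq P$, so Lemma~\ref{dropgeneric} applied to $R_1$ and $\fq$ gives an $A_+$-extension $S$ of $R_1$ with $P\cap S=(0)$ and $\fq\cap S\neq(0)$. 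In the case $\fq\in\spec_r T$ with $d+1\leq r\leq\dim T$: if $\height(\fq\cap R_1)>1$, take $S=R_1$; otherwise $\height(\fq\cap R_1)\leq 1$ and $\fq\not\subseteq P$, so Lemma~\ref{diffheights} applied to $R_1$ and $\fq$ gives an $A_+$-extension $S$ of $R_1$ with $P\cap S=(0)$ and $\height(\fq\cap S)>1$. In both cases, transitivity makes $S$ an $A_+$-extension of $R$, and the first bookkeeping fact gives $t+\m^2\in\image(S\to T/\m^2)$.

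I do not expect a genuine obstacle here: the substance is entirely contained in the three lemmas being composed, and the only points needing care are the transitivity of the $A_+$-extension conditions and the monotonicity of the property ``$t+\m^2$ lies in the image'' under enlarging the subring, which is what justifies performing the coset adjunction first rather than last.
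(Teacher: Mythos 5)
Your proposal is correct and follows essentially the same route as the paper: apply Lemma~\ref{cosets} first to handle $t+\m^2$ and preserve $P\cap S=(0)$, then (only if needed) apply Lemma~\ref{dropgeneric} or Lemma~\ref{diffheights} to the resulting ring, using transitivity of $A_+$-extensions. The explicit remarks on transitivity and on monotonicity of the image condition are points the paper leaves implicit, but nothing differs in substance.
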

\begin{proof}
If $\fq \in \spec_dT$, use Lemma~\ref{cosets} to find an $A_+$-extension $R'$ of $R$ such that $t+\m^2\in\image(R'\rightarrow T/\m^2)$ and $P \cap R' = (0)$.  If $\fq\cap R'\neq(0)$, then $S = R'$ and we are done. If $\fq\cap R'=(0)$, then we use Lemma~\ref{dropgeneric} to find an $A_+$-extension $S$ of $R'$ such that $\fq\cap S\neq(0)$ and $P \cap S = (0)$.

If $\fq\in\spec_{r}T$, where $d + 1 \leq r \leq \dim T$ then, as before, first use Lemma~\ref{cosets} to find an $A_+$-extension $R'$ of $R$ such that $t+\m^2\in\image(R'\rightarrow T/\m^2)$ and $P \cap R' = (0)$.  If ht$(\fq\cap R') > 1$, then $S = R'$ and we are done.  If ht$(\fq\cap R') \leq 1$ then use Lemma \ref{diffheights} to find an $A_+$-extension of $R'$ such that $P \cap S = (0)$ and ht$(\fq \cap S) > 1$.
%\textbf{we'll need to add to this I think}
\end{proof}

For the proof of our main theorem, we apply Lemmas~\ref{h4} and \ref{doubleext} infinitely often.  The following result, adapted from Lemma 6 of~\cite{heitmann93}, will allow us to do so.

\begin{lemma} 
\label{union}
Let $(T,\m)$ be a complete local ring and $R_0$ a $Z_d$-subring of $T$ with distinguished set $\cal{Q}_{R_0}$.  Let $P$ be a nonmaximal ideal of $T$ such that $P \cap R_0 = (0)$.  Let $\Omega$ be a well-ordered set with least element $0$ and assume either $\Omega$ is countable or, for all $\beta\in\Omega, |\{\gamma\in\Omega \mid\gamma<\beta\}|<|T/\m|$.  Suppose $\{R_{\beta}\mid \beta\in\Omega\}$ is an ascending collection of rings such that $R_{\alpha} \cap P = (0)$ for every $\alpha \in \Omega$ and such that, if $\beta$ is a limit ordinal, then $R_{\beta}=\bigcup_{\gamma<\beta}R_{\gamma}$,
with $\cal Q_{R_{\beta}}$ defined as $\bigcup_{\gamma<\beta}Q_{R_{\gamma}}$, 
while if $\beta=\gamma+1$ is a successor ordinal then $R_{\beta}$ is an $A_+$-extension of $R_{\gamma}$.

Then $S=\bigcup_{\beta \in \Omega}  R_{\beta}$ satisfies all the conditions to be a $Z_d$-subring of $T$ with distinguished set $\cal Q_{S} = \bigcup_{\beta \in \Omega} Q_{R_{\beta}} $except the cardinality condition. Instead, $|S|\leq\sup(\aleph_0, |R_0|,|\Omega|)$.  Furthermore, $P \cap S = (0)$, elements which are prime in some $R_{\beta}$ remain prime in $S$, and $\cal Q_{R_0} \subseteq \cal Q_S$.
\end{lemma}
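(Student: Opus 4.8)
The plan is to check that $S=\bigcup_{\beta\in\Omega}R_\beta$ inherits each defining property of a $Z_d$-subring directly from the $R_\beta$, the only genuine work being the height-one conditions (iii) and (iv). The structural facts are immediate: $S$ is a subring of $T$ as an ascending union of subrings, it is a domain since each $R_\beta$ is, and it is quasi-local with maximal ideal $\m\cap S$ because any $x\in S\setminus\m$ lies in some $R_\beta$ where it is a unit, so $x^{-1}\in R_\beta\subseteq S$. A routine transfinite induction gives $|R_\beta|\le\sup(\aleph_0,|R_0|,|\beta|)$ — using the $A_+$-extension bound $|R_{\gamma+1}|\le\sup(\aleph_0,|R_\gamma|)$ at successors and a union estimate at limits — whence $|S|\le|\Omega|\cdot\sup_\beta|R_\beta|\le\sup(\aleph_0,|R_0|,|\Omega|)$. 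Since intersecting $S$ with a fixed ideal of $T$ commutes with the ascending union, condition (ii) and the equality $P\cap S=(0)$ follow from the corresponding statements for each $R_\beta$. Finally, I set $\cal Q_S=\bigcup_{\beta\in\Omega}\cal Q_{R_\beta}\subseteq\spec_dT$; the $\cal Q_{R_\beta}$ form an ascending chain (by $A_+$-extension condition (iii) at successors, by hypothesis at limits), so in particular $\cal Q_{R_0}\subseteq\cal Q_S$.

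The real content is two facts. First, \emph{prime elements persist}: if $p$ is prime in $R_{\beta_0}$ then $p$ is prime in $R_\beta$ for every $\beta\ge\beta_0$, and hence in $S$. This is a transfinite induction on $\beta$: the successor step is exactly $A_+$-extension condition (i), and at a limit ordinal (and in the passage to $S$) any relation $ab=pc$ already holds in some $R_\gamma$, while $p$ stays a nonzero nonunit throughout since $p\in\m\cap R_{\beta_0}$. Consequently $S$ is a UFD: a nonzero nonunit of $S$ lies in some $R_\beta$, where it is a nonunit, and its prime factorization there is a prime factorization in $S$. Second, \emph{height-one primes extend principally}: if $\fq\in\spec T$, $\fq\cap R_\beta=pR_\beta$ for a prime element $p$ of $R_\beta$, and $\height(\fq\cap R_{\beta'})\le1$ for all $\beta'\ge\beta$, then $\fq\cap S=pS$. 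Indeed each $\fq\cap R_{\beta'}$ is $(0)$ or a principal prime $p'R_{\beta'}$, and since $0\neq p\in\fq\cap R_{\beta'}$ it is the latter; as $p$ is prime and $p'$ is a nonunit, $p\in p'R_{\beta'}$ forces $p'R_{\beta'}=pR_{\beta'}$, and taking the union over $\beta'\ge\beta$ gives $\fq\cap S=pS$.

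With these in hand, condition (iii) for $S$ is quick: if $t\in T$ is regular and $\fq\in\ass(T/tT)$, each $\fq\cap R_\beta$ has height at most $1$ by condition (iii) for $R_\beta$; if all are zero then $\fq\cap S=(0)$, and otherwise the second fact gives $\fq\cap S=pS$, a principal prime of the UFD $S$, of height at most $1$. For condition (iv), I must show $f\colon\cal Q_S\to\spec_1S$, $f(\fq)=\fq\cap S$, is a bijection. It is well defined with $f(\fq)=pS$ whenever $\fq\in\cal Q_{R_\beta}$ and $pR_\beta=\fq\cap R_\beta$, because the ascending chain of distinguished sets makes $\fq\cap R_{\beta'}\in\spec_1R_{\beta'}$ for all $\beta'\ge\beta$, so the second fact applies. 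Injectivity: if $\fq_1\cap S=\fq_2\cap S$ with $\fq_1,\fq_2\in\cal Q_{R_\beta}$ for a common $\beta$, contracting to $R_\beta$ and using that $f$ is a bijection at level $\beta$ gives $\fq_1=\fq_2$. Surjectivity: given $\p=pS\in\spec_1S$ with $p$ prime in $S$, let $\beta_0$ be least with $p\in R_{\beta_0}$; factor $p$ into primes in the UFD $R_{\beta_0}$ and note, by the first fact, that those factors stay prime — hence nonunits — in $S$, so irreducibility of $p$ in $S$ forces a single factor and $pR_{\beta_0}\in\spec_1R_{\beta_0}$; then the element of $\cal Q_{R_{\beta_0}}\subseteq\cal Q_S$ that the level-$\beta_0$ bijection sends to $pR_{\beta_0}$ is carried by $f$ to $pS=\p$.

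I expect the surjectivity of $f$ in condition (iv) to be the main obstacle, since it is the only place where one reasons backward from a prime of $S$ to an earlier stage; the crucial observation is that a prime element of $S$ is, up to a unit, already a prime element of the first $R_\beta$ containing it, which is exactly what allows invoking the level-$\beta$ bijection. I would also remark that the hypothesis bounding $|\{\gamma\in\Omega\mid\gamma<\beta\}|$ by $|T/\m|$ is not needed for the statement proved here; it is recorded because it guarantees, in the applications where this lemma is used, that the limit-stage rings $R_\beta$ themselves satisfy the cardinality condition (i) of a $Z_d$-subring.
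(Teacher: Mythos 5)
Your proof is correct and follows essentially the same route as the paper's: a transfinite induction in which the only content beyond Heitmann's Lemma 6 (prime persistence, the UFD property, conditions (i)--(iii)) is condition (iv), handled exactly as you do by showing that a distinguished prime $\fq$ with $\fq\cap R_{\beta}=pR_{\beta}$ satisfies $\fq\cap R_{\beta'}=pR_{\beta'}$ at every later stage, and then checking injectivity and surjectivity of the resulting map. The one presentational caveat is that your appeals to ``condition (iii) for $R_{\beta}$'' and ``the level-$\beta$ bijection'' at limit ordinals are licensed only by running the entire argument as a transfinite induction (the limit stages are themselves unions of the kind being analyzed, not hypothesized to be $Z_d$-subrings), which you make explicit only for the cardinality bound; the paper handles this by adjoining a top element $\delta$ to $\Omega$ and inducting over $\Omega\cup\{\delta\}$ with $R_{\delta}=S$.
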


\begin{proof}  First note that, since $R_{\alpha} \cap P = (0)$ for every $\alpha \in \Omega$, it is clear that $S \cap P = (0)$.  We now follow the proof of Lemma 6 in~\cite{heitmann93}, adding in additional steps where necessary.  Define $\Omega' =  \Omega \cup \{\delta\}$ and declare that $\delta > \alpha$ for all $\alpha \in \Omega$.  Now define $R_{\delta} = S$.
We will show that, for all $\alpha \in \Omega'$, $R_{\alpha}$ is a $Z_d$-subring of $T$ with some distinguished set $\cal{Q}_{R_{\alpha}}$ except for possibly condition (i) and that $|R_{\alpha}| \leq \sup(\aleph_0,|R_0|, |\{\beta \in \Omega \, | \, \beta < \alpha\}|)$.  Furthermore, we will show that, for $\beta < \alpha$, we have $\cal Q_{R_{\beta}} \subseteq \cal Q_{R_{\alpha}}$ and prime elements of $R_{\beta}$ remain prime in $R_{\alpha}$.  We proceed with transfinite induction, the base case being trivial.

Assume that $\alpha \in \Omega'$ and that the inductive hypotheses hold for every $\beta < \alpha$.  In the proof of Lemma 6 in~\cite{heitmann93}, it is shown that $R_{\alpha}$ satisfies the conditions for being a $Z_d$-subring of $T$ except for conditions (i) and (iv), that the cardinality condition given in the preceding paragraph holds, and that, if $\beta < \alpha$, every prime element of $R_{\beta}$ is prime in $R_{\alpha}$.  
%In the proof of Lemma 6 of~\cite{heitmann93}, it is shown that $S=\bigcup R_{\beta}$ is an $N$-subring of $T$ except for the cardinality condition, that $|S|=\operatorname{sup}\{\aleph_0,|R_0|,|\Omega|\}$, and that elements which are prime in some $R_{\beta}$ remain prime in $S$.  We therefore need only show that condition (iv) of Definition~\ref{zsub} holds for $S$, that $P \cap S = (0)$, and that $\cal Q_{R_0} \subseteq \cal Q_S$.  We do this by transfinite induction, the base case being trivial.  Let $\alpha \in \Omega$ and assume that for all $\beta$ and $\beta'$ in $\Omega$ with $\beta < \beta' < \alpha$, $R_{\beta}$ and $R_{\beta'}$ satisfy conditions (i) - (iv) of Definition~\ref{zsub}, that $R_{\beta'} \cap P = (0)$,  and that $\cal Q_{R_{\beta}} \subseteq \cal Q _{R_{\beta'}}$.  
If $\alpha = \gamma + 1$ is a successor ordinal, then $R_{\alpha}$ is an $A_+$-extension of $R_{\gamma}$, and so $R_{\alpha}$ satisfies condition (iv) of Definition~\ref{zsub} and $\cal Q_{R_{\gamma}} \subseteq \cal Q _{R_{\alpha}}$.  

If $\alpha$ is a limit ordinal, then $R_{\alpha} = \bigcup_{\gamma<\alpha}R_{\gamma}$, and we have $\cal Q_{R_{\alpha}} = \bigcup_{\gamma<\alpha} \cal Q_{R_{\gamma}}$.  By definition,
$\cal Q_{R_{\gamma}} \subseteq \cal Q _{R_{\alpha}}$ for all $\gamma < \alpha$.  We have left to show that $\cal{Q}_{R_{\alpha}}$ is a distinguished set for $R_{\alpha}$.  Let $\fq_p \in \cal{Q}_{R_{\alpha}}$.  Then $\fq_p \in \cal{Q}_{R_{\gamma}}$ for some $\gamma < \alpha$.  Therefore, $\fq_p \cap R_{\gamma} = pR_{\gamma}$ for some prime element $p$ of $R_{\gamma}$.  We will show that $\fq_p \cap R_{\alpha} = pR_{\alpha}$.  Clearly, $pR_{\alpha} \subseteq \fq_p \cap R_{\alpha}$.  If $x \in R_{\alpha} \cap \fq_p$, then $x \in R_{\beta}$ for some $\beta < \alpha$.  Define $\lambda = \max\{\beta,\gamma\}$.  Then $p,x \in R_{\lambda}$ and $\fq_p \in \cal{Q}_{R_{\lambda}}$.  It follows that $\fq_p \cap R_{\lambda} = pR_{\lambda}$, so $x \in \fq_p \cap R_{\lambda} = pR_{\lambda} \subseteq pR_{\alpha}$, and we have that $\fq_p \cap R_{\alpha} = pR_{\alpha}$ as desired.  It is not difficult to show that if $\fq_p, \fq_p' \in \cal{Q}_{R_{\alpha}}$ with $\fq_p \cap R_{\alpha} = \fq_p' \cap R_{\alpha}$ then $\fq_p = \fq_p'$, and that if $\p = pR_{\alpha}$ is a height one prime ideal of $R_{\alpha}$, then there is a $\fq_p \in \cal{Q}_{R_{\alpha}}$ such that $\fq_p \cap R_{\alpha} = \p$.  Hence $\cal{Q}_{R_{\alpha}}$ is a distinguished set for $R_{\alpha}$.

By induction, then, we have that $S$ satisfies all of the desired properties.
\begin{comment}

%In the proof of Lemma 6 of~\cite{heitmann93}, it is shown that $R_{\alpha}$ is a UFD, and if $\beta < \alpha$, then an element which is prime in $R_{\beta}$ is also prime in $R_{\alpha}$.  It follows that, if $\p$ is a height one prime ideal of $R_{\alpha}$, then $\p = pR_{\alpha}$ for some prime element $p$ of $R_{\alpha}$.  Moreover, $p \in R_{\beta}$ for some $\beta < \alpha$ and $p$ is a prime element of $R_{\beta}$.

%It follows that $R_{\alpha}$ satisfies condition (iv) of Definition~\ref{zsub}, that $P \cap R_{\alpha} = (0)$, and that $\cal Q_{R_${\gamma}} \subseteq \cal Q _{R_{\alpha}}$ for all $\gamma < \alpha$.  So by transfinite induction, $S$ satisfies condition (iv) of Definition~\ref{zsub}, $P \cap S = (0)$, and $\cal Q_{R_0} \subseteq \cal Q_S$.

%The case where $\beta$ is a limit ordinal is simple as $R_{\beta}=\bigcup_{\gamma<\beta}R_{\gamma}$, so condition (iv) of~\ref{zsub} holds trivially because it is true for each $R_{\gamma}$ where $\gamma<\beta$.  Now we consider the case where $\alpha$ is a successor ordinal.  In this case, $R_{\beta}$ is defined to be an $A_+$-extension of $R_{\gamma}$ and thus condition (iv) holds by the definition of $A_+$-extension.  Then because $S=\bigcup R_{\beta}$, condition (iv) holds as well.  Then $S$ satisfies all the conditions of $Z_d$-subring except perhaps the cardinality condition, and instead $|S|=\operatorname{sup}\{\aleph_0,|R_0|,|\Omega|\}$.
\end{comment}
\end{proof}

\begin{lemma}\label{closeideals}
Under Assumption~\ref{assume}, let $(T,\m)$ be such that $\depth T > 1$ and, for all $\p\in\ass T$, $\height\p\leq d-1$, and suppose that $T$ satisfies the condition that if $z$ is a regular element of $T$ and $Q \in \ass(T/zT)$, then $\height Q \leq d$.  Let $P$  be a nonmaximal prime ideal of $T$ such that $P \cap R = (0)$ and let $t+\m^2\in T/\m^2$.  If $\fq\in\spec_d T$ with $\fq \not\subseteq P$, then there exists an $A_+$-extension $S$ of $R$ such that $t+\m^2\in\image(S\rightarrow T/\m^2)$, $\fq\cap S\neq(0)$, $P \cap S = (0)$, and for every finitely generated ideal $\fa$ of $S$, $\fa T\cap S=\fa$.  If $\fq\in\spec_{r} T$ with $d + 1 \leq r \leq \dim T$ and $\fq \not\subseteq P$, then there exists an $A_+$-extension $S$ of $R$ such that $t+\m^2\in\image(S\rightarrow T/\m^2)$, ht$(\fq\cap S) > 1$, $P \cap S = (0)$, and for every finitely generated ideal $\fa$ of $S$, $\fa T\cap S=\fa$.
\end{lemma}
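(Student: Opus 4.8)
The plan is to interleave the ideal-closing construction of Lemma~4 in~\cite{heitmann93} (which we have already upgraded in Lemma~\ref{h4}) with the ``special prime'' manipulation of Lemma~\ref{doubleext}, and then take a union via Lemma~\ref{union}. The strategy is the standard one: build a countable (or suitably small) well-ordered tower of $A_+$-extensions $R = R_0 \subseteq R_1 \subseteq \cdots$ inside $T$, where at each successor step we close up one finitely generated ideal, and at the very first step (or interleaved throughout) we also arrange the conditions $t + \m^2 \in \image(S \to T/\m^2)$ and $\fq \cap S \neq (0)$ (resp.\ $\height(\fq \cap S) > 1$), all while maintaining $P \cap R_\beta = (0)$. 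Since we only need these conditions once and finitely generated ideals need to be closed ``in the limit,'' the bookkeeping is the only real content.

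First I would apply Lemma~\ref{doubleext} to $R$ to obtain an $A_+$-extension $R_1$ of $R$ with $t + \m^2 \in \image(R_1 \to T/\m^2)$, $P \cap R_1 = (0)$, and $\fq \cap R_1 \neq (0)$ in the case $\fq \in \spec_d T$ (resp.\ $\height(\fq \cap R_1) > 1$ in the case $\fq \in \spec_r T$ with $r \geq d+1$). Note that these properties, once achieved, persist under any further ring extension inside $T$: the image condition is monotone, $\fq \cap R_1 \neq (0)$ forces $\fq \cap S \neq (0)$ for $S \supseteq R_1$, and similarly $\height(\fq \cap R_1) > 1$ implies $\height(\fq \cap S) > 1$ since intersecting a larger subring with $\fq$ can only enlarge (hence not lower the height of) the contracted ideal — here using that contraction of primes behaves well, or more carefully that a chain $\p_0 \subsetneq \p_1 \subsetneq \fq$ in $R_1$ contracted from $T$ gives such a chain in $S$ as well. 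So after this first step we may forget about $t$ and $\fq$ entirely and concentrate on closing ideals.

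Next I would run Heitmann's ideal-closure machine: enumerate a ``to-do list'' of pairs $(\fa, c)$ where $\fa$ is a finitely generated ideal of the current ring and $c \in \fa T \cap (\text{current ring})$, and at each successor ordinal apply Lemma~\ref{h4} (our $Z_d$-subring version, which also preserves $P \cap S = (0)$) to adjoin the witnesses making $c \in \fa S$. At limit ordinals take unions as in Lemma~\ref{union}. Because each ring in the tower has cardinality at most $\sup(\aleph_0, |R|)$ and hence at most that many finitely generated ideals and that many elements to close, a standard transfinite dovetailing argument over a well-ordered index set $\Omega$ of cardinality $\sup(\aleph_0, |R|)$ (which satisfies the hypotheses of Lemma~\ref{union}, using condition~(i) of $Z_d$-subrings to keep $|R| < |T/\m|$ when $T/\m$ is uncountable, or countability otherwise) ensures that every pair $(\fa, c)$ that ever appears gets handled at some stage before the end. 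The union $S = \bigcup_{\beta \in \Omega} R_\beta$ then satisfies $\fa T \cap S = \fa$ for every finitely generated ideal $\fa$ of $S$: given $\fa = (a_1, \ldots, a_n)S$ and $c \in \fa T \cap S$, all the $a_i$ and $c$ lie in some $R_\beta$, so $(a_1, \ldots, a_n)R_\beta$ and $c$ were on the to-do list and were closed at a later stage, giving $c \in \fa S$.

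Finally, Lemma~\ref{union} tells us that $S$ is a $Z_d$-subring of $T$ with distinguished set $\cal{Q}_S = \bigcup_{\beta} \cal{Q}_{R_\beta}$ except for the cardinality bound, for which it gives $|S| \leq \sup(\aleph_0, |R_0|, |\Omega|) = \sup(\aleph_0, |R|)$; combined with condition~(i) for $R$ this re-establishes condition~(i) for $S$, so $S$ is genuinely a $Z_d$-subring, it is an $A_+$-extension of $R$ (prime elements of $R$ stay prime, the cardinality bound holds, and $\cal{Q}_R \subseteq \cal{Q}_S$), and $P \cap S = (0)$. The main obstacle — really the only place care is needed — is the dovetailing bookkeeping: one must set up the enumeration of $\Omega$ so that the to-do list is ``refreshed'' at each stage (new ideals appear as the ring grows) yet every task is still completed cofinally, which is exactly the argument Heitmann uses for Lemma~6 in~\cite{heitmann93} and which we have already localized into Lemma~\ref{union}; the new content here is merely checking that the extra conditions on $P$ and on $\fq$ survive, which we noted above is automatic.
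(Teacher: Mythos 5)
Your proposal follows essentially the same route as the paper: first apply Lemma~\ref{doubleext} once to secure the conditions on $t+\m^2$, $P$, and $\fq$, then run Heitmann's ideal-closure tower via Lemma~\ref{h4} and Lemma~\ref{union}. The only difference is bookkeeping: the paper avoids the ``refreshed to-do list'' dovetailing by using a two-level construction (a transfinite chain that closes every ideal of $R_n$ inside $R_{n+1}$, followed by a countable union over $n$, so that any finitely generated ideal of $S$ together with any $c\in\fa T\cap S$ already lives in some $R_n$ and gets closed at stage $n+1$); your single-pass transfinite dovetailing also works but requires the enumeration argument you defer.

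One point deserves correction. Your first justification for the persistence of $\height(\fq\cap R_1)>1$ --- that the contraction to a larger subring is a larger ideal and hence cannot have smaller height --- is not a valid inference, since heights are computed in different rings. The backup argument (lift a chain) is the right one, but the nontrivial step is strictness: given $(0)\subsetneq pR_1\subsetneq \fq\cap R_1$ and $x\in\fq\cap R_1$ with $x\notin pR_1$, one must check $x\notin pS$. This is where the $A_+$-extension machinery earns its keep: prime elements of $R_1$ remain prime in $S$ (Lemma~\ref{union}), so the prime factorization of $x$ in $R_1$ is its factorization in $S$ and $p$ does not occur in it, whence $x\notin pS$ and the chain $(0)\subsetneq pS\subsetneq\fq\cap S$ survives. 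With that supplied, your proof is complete.
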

\begin{proof}
If $\fq \in\spec_d T$, employ Lemma~\ref{doubleext} to obtain an $A_+$-extension $R_0$ of $R$ such that $t+\m^2\in\image(R_0\rightarrow T/\m^2)$, $P \cap R_0 = (0)$, and $\fq\cap R_0\neq(0)$. 
If $\fq \in\spec_{r} T$ with $d + 1 \leq r \leq \dim T$, employ Lemma~\ref{doubleext} to obtain an $A_+$-extension $R_0$ of $R$ such that $t+\m^2\in\image(R_0\rightarrow T/\m^2)$, $P \cap R_0 = (0)$, and ht$(\fq\cap R_0) > 1$.  Let $\Omega=\{(\fa,c)\mid\fa$ finitely generated ideal of $R_0 \text{ and }c\in \fa T\cap R_0\}$. Then $|\Omega|=|R_0|$ and so either $\Omega$ is countable or $|\Omega|<|T/\m|$. Well-order $\Omega$, letting 0 designate its initial  element, in such a way that $\Omega$ does not have a maximal element; then it clearly satisfies the hypothesis of Lemma~\ref{union}.  We will recursively define an increasing chain of rings with one ring for every element of $\Omega$.  We begin with $R_0$. If $\beta=\gamma+1$ is a successor ordinal and $\gamma=(\fa,c)$, then we choose $R_{\beta}$ to be an $A_+$-extension of $R_{\gamma}$ given by Lemma~\ref{h4} such that $c\in\fa R_{\beta}$ and $P \cap R_{\beta} = (0)$. If $\beta$ is a limit ordinal, define $R_{\beta}=\bigcup_{\gamma<\beta}R_{\gamma}$ and $\cal Q_{R_{\beta}} = \bigcup_{\gamma<\beta}\cal Q_{R_{\gamma}}$. Set $R_1=\bigcup R_{\beta}$.  By Lemma~\ref{union}, we see that $R_1$ is an $A_+$-extension of $R_0$ and $P \cap R_1 = (0)$.  Also if $\fa$ is any finitely generated ideal of $R_0$ and $c\in \fa T\cap R_0$, then $(\fa,c)=\gamma$ for some $\gamma \in \Omega$.  Then for some $\beta>\gamma$, $c\in\fa R_{\beta}\subseteq \fa R_1$.  Thus $\fa T\cap R_0\subseteq\fa R_1$.

We repeat the process to obtain an $A_+$-extension $R_2$ of $R_1$ such that $P \cap R_2 = (0)$ and $\fa T\cap R_1\subseteq\fa R_2$ for every finitely generated ideal $\fa$ of $R_1$.  Continue recursively to obtain an ascending chain $R_0\subseteq R_1\subseteq \cdots$ such that $P \cap R_n = (0)$ and $\fa T\cap R_n\subset\fa R_{n+1}$ for every finitely generated ideal $\fa$ of $R_n$.  Then by Lemma~\ref{union}, $S=\bigcup R_i$ with $\cal Q_{S} = \bigcup \cal Q_{R_{i}}$ is an $A_+$-extension of $R_0$, and so also of $R$, and $P \cap S = (0)$.  Further, if $\fa$ is a finitely generated ideal of $S$, then some $R_n$ contains a generating set $\{a_1, \ldots ,a_k\}$ for $\fa$.  If $c\in\fa T\cap S$, then $c\in R_m$ for some $m\geq n$, so $c\in(a_1,\ldots ,a_k)T\cap R_m   \subseteq (a_1, \ldots ,a_k)R_{m+1}\subseteq \fa$. Thus $\fa T\cap S=\fa$.  To see that ht$(\fq\cap R_0) > 1$ implies ht$(\fq\cap S) > 1$, let $(0) \subset pR_0 \subset \fq \cap R_0$ be a strictly increasing chain of prime ideals of $R_0$, and let $x \in \fq \cap R_0$ with $x \not\in pR_0$.  As prime elements in $R_0$ are prime in $S$, the prime factorization of $x$ in $R_0$ is the prime factorization of $x$ in $S$ and so $x \not\in pS$ and we have $(0) \subset pS \subset \fq \cap S$ is a strictly increasing chain of prime ideals of $S$.
\end{proof}

%\begin{lemma} \label{closeideals2}
%Let $R$ be a subring of $T$ with prime element $p$ and let $t+\m^2\in T/\m^2$ and $\fq\in\spec_{d+1}$.  Then there exists an $A_+$-extension $S$ of $R$ such that $\fq\cap S\supset pS$, $t+\m^2\in\image(S\rightarrow T/\m^2)$, and $\fa T\cap S=\fa$ for all finitely generated ideals $\fa$ of $S$.
%\end{lemma}
%\begin{proof}
%\end{proof}

\section{The Main Theorem and Corollaries}

\begin{theorem}\label{bigthm}
  Let $(T,\m)$ be a complete local equidimensional ring such that $\dim T\geq 2$ and $\depth T>1$.  Suppose that no integer of $T$ is a zerodivisor in $T$ and $|T|=|T/\m|$.  Let $d$ and $t$ be integers such that $1 \leq d \leq \dim T - 1$, $0 \leq t \leq \dim T - 1$, and $d - 1 \leq t$.  Assume that, for every $\p \in \ass T$, ht $\p \leq d - 1$ and that if $z$ is a regular element of $T$ and $Q \in \ass (T/zT)$, then ht $Q \leq d$.  Then there exists a local UFD A such that $\widehat{A} = T$, $\alpha(A,(0)) = t$, and, if $\p \in \spec_1A$, then $\alpha(A,\p) = d - 1$.

\end{theorem}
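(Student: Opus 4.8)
\emph{Strategy.} The plan is to carry out a transfinite version of Heitmann's construction: we build $A$ as the union of an ascending chain of $Z_d$-subrings of $T$, applying Lemma~\ref{closeideals} at successor stages (to simultaneously close finitely generated ideals, surject onto $T/\m^2$, and control the contractions of the relevant primes of $T$) and Lemma~\ref{union} at limit stages, and afterward we read off $\widehat A$, the UFD property, and the dimensions of the formal fibers from the way the chain was built.

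\emph{Setup.} Since no integer of $T$ is a zerodivisor, $T$ has characteristic zero, so its prime subring is $\mathbb Z$; let $R_0$ be the localization of $\mathbb Z$ at $\m\cap\mathbb Z$. Using the hypotheses $\height\p\le d-1$ for $\p\in\ass T$ and $\height Q\le d$ for $Q\in\ass(T/zT)$ with $z$ regular, one checks directly that $R_0$ is a $Z_d$-subring of $T$; its distinguished set $\cal Q_{R_0}$ is empty when $R_0=\mathbb Q$ and otherwise consists of a single height $d$ prime of $T$ lying over $\m\cap R_0$, which exists because $T$ is catenary and equidimensional with $d\le\dim T-1$. Note $|T|=|T/\m|$ forces $T/\m$ to be infinite (else $T$ is finite, contradicting $\dim T\ge 2$), hence $|T|\ge 2^{\aleph_0}$ by Lemma~\ref{modcard}, so condition (i) holds for $R_0$ with strict inequality. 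Finally fix a nonmaximal prime $P$ of $T$ with $P\cap R_0=(0)$ and with $\height P=t$ when $t\ge 1$ (take $P$ a minimal prime when $t=0$); such $P$ exists since $t\le\dim T-1$ and $R_0$ is at most a localization of $\mathbb Z$ at one rational prime.

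\emph{The chain.} Let $\Lambda$ be the set of pairs $(u+\m^2,\fq)$ with $u\in T$ and $\fq\in\spec T$ subject to: $\height\fq\ge d+1$ and $\fq\not\subseteq P$; or, only in the case $t=d-1$, also $\height\fq=d$. (When $t\ge d$ we deliberately omit every prime contained in $P$, since such a prime contracts into $P\cap A=(0)$ and does no harm; when $t=d-1$ every prime of height $\ge d$ automatically satisfies $\fq\not\subseteq P$ as $\height P=d-1$.) Then $|\Lambda|\le|T|=|T/\m|$, so well-order a set $\Omega$ of cardinality $|T|$, of order type an initial ordinal, surjecting onto $\Lambda$ with no maximal element; then $\Omega$ satisfies the hypothesis of Lemma~\ref{union}. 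Recursively build $\{R_\beta\}_{\beta\in\Omega}$ with $R_\beta\cap P=(0)$ throughout: at a successor $\beta=\gamma+1$ with image $(u+\m^2,\fq)\in\Lambda$, let $R_\beta$ be the $A_+$-extension of $R_\gamma$ given by Lemma~\ref{closeideals}, so that $u+\m^2\in\image(R_\beta\to T/\m^2)$, every finitely generated ideal of $R_\beta$ is closed, $P\cap R_\beta=(0)$, and $\fq\cap R_\beta\ne(0)$ if $\height\fq=d$ while $\height(\fq\cap R_\beta)>1$ if $\height\fq>d$; at a limit $\beta$, put $R_\beta=\bigcup_{\gamma<\beta}R_\gamma$ and $\cal Q_{R_\beta}=\bigcup_{\gamma<\beta}\cal Q_{R_\gamma}$. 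Set $A=\bigcup_{\beta\in\Omega}R_\beta$. By Lemma~\ref{union}, $A$ is a $Z_d$-subring of $T$ except for the cardinality bound, $P\cap A=(0)$, $\cal Q_{R_0}\subseteq\cal Q_A:=\bigcup_\beta\cal Q_{R_\beta}$, elements prime at some stage stay prime in $A$, and $A$ is quasi-local with maximal ideal $\m\cap A$. Every coset of $\m^2$ is hit, so $A\to T/\m^2$ is onto; and a finitely generated ideal of $A$ is generated inside some $R_\beta$ and was closed at stage $\beta+1$, so $\fa T\cap A=\fa$ for every finitely generated $\fa\subseteq A$. Hence Lemma~\ref{comp} gives that $A$ is Noetherian with $\widehat A\cong T$; being a directed union of UFDs in which primes stay prime, and Noetherian, $A$ has every height one prime generated by a prime element, so $A$ is a UFD (the argument of~\cite{heitmann93}).

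\emph{Formal fibers and the main obstacle.} For a prime $\p$ of $A$, the fiber ring $T\otimes_A\kappa(\p)$ has spectrum $\{\fq\in\spec T:\fq\cap A=\p\}$ with its inclusion order, so $\alpha(A,\p)$ is the supremum of lengths of chains among such $\fq$. Condition (iv) gives, for each prime element $p$ of $A$, some $\fq_p\in\spec_d T$ with $\fq_p\cap A=pA$; a saturated chain in $T$ from a minimal prime of $pT$ up to $\fq_p$ has length $d-1$ and, since every member contains $p$ and is contained in $\fq_p$, contracts to $pA$, so $\alpha(A,pA)\ge d-1$; conversely any $\fq$ with $\fq\cap A=pA$ contains $p$, hence $\fq\not\subseteq P$, hence $\height\fq\le d$ (a prime of height $>d$ outside $P$ was forced to meet $A$ in height $>1$, and this persists via the prime-stays-prime mechanism), so $\alpha(A,pA)=d-1$; as every height one prime of $A$ has the form $pA$, this holds at all of them. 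For the generic fiber, a saturated chain in $T$ below $P$ contracts to $(0)$ and has length $\height P=t$, so $\alpha(A,(0))\ge t$; conversely $\fq\cap A=(0)$ forces $\fq\subseteq P$ (giving $\height\fq\le t$) or $\fq\not\subseteq P$ (giving $\height\fq\le d\le t$, and $\height\fq\le d-1$ when $t=d-1$ since then every height $d$ prime was made to meet $A$), so $\alpha(A,(0))=t$. The genuinely delicate step is this last one: nearly all of the analytic content is already packed into Lemmas~\ref{cosets}--\ref{closeideals}, and what remains is the bookkeeping — choosing $\Omega$ large enough with small enough initial segments, and picking the right family of primes to feed in for the two cases $t\ge d$ and $t=d-1$ — together with the careful passage from ``exactly these primes of $T$ have been handled, and $P\cap A=(0)$'' to the \emph{exact} equalities $\alpha(A,pA)=d-1$ and $\alpha(A,(0))=t$, which relies on $T$ being equidimensional and catenary so that heights add along the chains appearing in the fibers.
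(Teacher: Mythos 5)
Your proposal is correct and follows essentially the same route as the paper: the same starting $Z_d$-subring $R_0$, the same transfinite chain driven by Lemma~\ref{closeideals} at successor stages and Lemma~\ref{union} at limits, the same auxiliary prime $P$ of height $t$ kept disjoint from $A$, and the same reading-off of the fiber dimensions from condition (iv), the chain below $P$, and the forced intersections with the fed-in primes of $T$. The only substantive differences are organizational — you fold the paper's separate case $t=d=\dim T-1$ into the general $t\ge d$ case (which works, since the needed upper bounds are automatic there) and you carry $P$ along even when $t=d-1$ — and the one step you leave essentially unjustified, the existence of a height $t$ prime $P$ with $P\cap R_0=(0)$ when $R_0=\mathbb{Z}_{(p)}$ and $t>0$, does require the short argument the paper gives (only finitely many height one primes contain $p$, then iterate up through quotients using that $T$ is catenary and equidimensional).
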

\begin{proof}
First assume $t = d - 1$.  Let $\Omega_1=\spec_d T$, well-ordered so that each element of $\Omega_1$ has fewer than $|\Omega_1|$ predecessors.  
Let $\Omega_2=T/\m^2$, well-ordered so that each element of $\Omega_2$ has fewer than $|\Omega_2|$ predecessors.  Because $|\Omega_1|=|\Omega_2|$ and we are ordering both sets so that each element of $\Omega_i$ has fewer than $|\Omega_i|$ predecessors, we can use $\mathscr{B}$ as the index set for both of the $\Omega_i$'s.  Let 

\[
\Omega=\{(\fq_a,t_a)\mid \fq_a\in\Omega_1,t_a\in\Omega_2,\text{ where }a\in\mathscr{B}\}
\]

\noindent well-ordered using $\mathscr{B}$ as the index set.  Then $\Omega$ is the diagonal of $\Omega_1\times\Omega_2$. Let 0 designate the first element of $\Omega$.  Now, $\Omega$ satisfies the hypothesis of Lemma~\ref{union}, and we now recursively define a family of rings $\{R_{\beta}\mid\beta\in\Omega\}$ which also satisfies the hypotheses of Lemma~\ref{union}.  As in the proof of Theorem 8 in~\cite{heitmann93}, let $R_0$ be the appropriate localization of the prime subring of $T$: either $\mathbb{Q}$, $\mathbb{Z}_{p}$, or $\mathbb{Z}_{(p)}$ where $p$ is a prime integer.  It is not difficult to verify that $R_0$ is an $N$-subring.  Now, $R_0$ is either dimension $0$ or $1$.  If it is dimension $0$, define $\cal Q_{R_0}$ to be the empty set, and if it is dimension $1$, define $\cal Q_{R_0}$ to be any height $d$ prime ideal of $T$ that contains a minimal associated prime ideal of $pT$.   Then $R_0$ is a $Z_d$-subring of $T$ with distinguished set $\cal Q_{R_0}$.  We will use the shorthand $\beta=(\fq_{\beta},t_{\beta})$ for an element of $\Omega$.  Then, whenever $\beta=\omega+1$ is a successor ordinal, we let $R_{\beta}$ be an $A_+$-extension of $R_{\omega}$ chosen in accordance with Lemma~\ref{closeideals} so that $\fq_{\omega}\cap R_{\beta}\neq(0)$, $t_{\omega}\in\image(R_{\beta}\rightarrow T/\m^2)$, and $\fa T \cap R_{\beta} = \fa$ for every finitely generated ideal $\fa$ of $R_{\beta}$.  Note that in this case (where $t = d - 1$) we do not need to use the $P$ given in Lemma \ref{closeideals}.  If $\beta$ is a limit ordinal, choose $R_{\beta}=\bigcup_{\gamma<\beta}R_{\gamma}$.  We claim $A=\bigcup_{\beta \in \Omega} R_{\beta}$ is the desired example.

By construction, $A\rightarrow T/\m^2$ is onto.  Now let $\fa = (a_1,\ldots ,a_n)$ be a finitely generated ideal of $A$, and let $x \in \fa T \cap A$. Then there is a $\beta \in \Omega$ such that $\beta$ is a successor ordinal and $x, a_1, \ldots , a_n \in R_{\beta}$, so $x \in (a_1, \ldots ,a_n)T \cap R_{\beta} = (a_1, \ldots ,a_n)R_{\beta} \subseteq \fa$.  It follows that $\fa T\cap A=\fa$ for all finitely generated ideals of $A$.  Then by Lemma~\ref{comp}, $\widehat{A}= T$ and $A$ is Noetherian. By Lemma~\ref{union}, except for the cardinality condition, $A$ satisfies all the conditions of  being a $Z_d$-subring of $T$ with some distinguished set, which implies that for any $\p\in\spec_1A$, $\alpha(A,\p)\geq d-1$. Now let $\fq\in\spec_dT$.  Then by our construction there is some $R_{\beta}$ such that $\fq\cap R_{\beta}\neq(0)$, and so $\fq\cap A\neq(0)$.  Thus $\alpha(A,(0))\leq d-1$.  Then, by Theorem 1 of~\cite{matsumura88}, $d-1\geq\alpha(A,(0))\geq\alpha(A,\p)\geq d-1.$  Thus, for every $\p\in\spec_1A$, $\alpha(A,\p)=\alpha(A,(0))=d-1$.

Now we consider the case $t \neq d - 1$.  In this case, we adjust the construction described above.  We first find a prime ideal $P$ of $T$ such that ht$P = t$ and $P \cap R_0 = (0)$.  If $R_0 = \mathbb{Q}$ or $R_0 = \mathbb{Z}_p$, then all nonzero elements of $R_0$ are units of $T$, and so we can choose $P$ to be any prime ideal of $T$ with height $t$.   If $R_0 = \mathbb{Z}_{(p)}$ and $t = 0$, then choose $P$ to be any minimal prime ideal of $T$.  Since $p$ is not a zerodivisor of $T$, we have $P \cap R_0 = (0)$.  If $R_0 = \mathbb{Z}_{(p)}$, and $t > 0$ then note that $p$ is contained in finitely many height one prime ideals of $T$, and so there is a height one prime ideal $P_1$ such that $P_1 \cap R_0 = (0)$.  Now the element $p + P_1$ in $T/P_1$ is contained in finitely many height one prime ideals of $T/P_1$, and so there is a height one prime ideal $\bar{P}_2$ of $T/P_1$ such that $p + P_1 \not\in \bar{P}_2$.  As $T$ is universally catenary and equidimensional, there is a height two prime ideal $P_2$ of $T$ such that $R_0 \cap P_2 = (0)$.  Continue in this way to find a height $t$ prime ideal $P$ of $T$ such that $P \cap R_0 = (0)$.

Now suppose $t = d = \dim T - 1$.  Then use the same construction described for the $t = d - 1$ case with the following adjustments.  Let $\Omega_1 = \spec_d T - \{P\}$ where $P$ is the height $t$ prime ideal of $T$ chosen in the above paragraph.  Then, in the construction, whenever $\beta=\omega+1$ is a successor ordinal, let $R_{\beta}$ be an $A_+$-extension of $R_{\omega}$ chosen in accordance with Lemma~\ref{closeideals} so that $\fq_{\omega}\cap R_{\beta}\neq(0)$, $P \cap R_{\beta} = (0)$, $t_{\omega}\in\image(R_{\beta}\rightarrow T/\m^2)$, and $\fa T \cap R_{\beta} = \fa R_{\beta}$ for every finitely generated ideal $\fa$ of $R_{\omega}$.  Then, using this adjusted construction, we get a UFD $A$ such that $\widehat{A} = T$, $P \cap A = (0)$, and for every $\p \in \spec_1 A$, there is a height $d = \dim T - 1$ prime ideal of $T$ whose intersection with $A$ is $\p$.  It follows that $\alpha(A,(0)) = t$ and $\alpha(A,\p) = d - 1$ for all $\p \in \spec_1 A$.

Finally, consider all other cases.  That is, suppose either $t > d$ or $t = d$ with $t < \dim T - 1$.  Then use the same construction described for the $t = d - 1$ case with the following adjustments.  Let
$$\Omega_1 = \{ \fq \in \spec_r T \, | \, d + 1 \leq r \leq \dim T \mbox{ and } \fq \not\subseteq P  \}$$
where $P$ is the height $t$ prime ideal of $T$ chosen previously.  Then, in the construction, whenever $\beta=\omega+1$ is a successor ordinal, let $R_{\beta}$ be an $A_+$-extension of $R_{\omega}$ chosen in accordance with Lemma~\ref{closeideals} so that ht$(\fq_{\omega}\cap R_{\beta}) > 1$, $P \cap R_{\beta} = (0)$, $t_{\omega}\in\image(R_{\beta}\rightarrow T/\m^2)$, and $\fa T \cap R_{\beta} = \fa R_{\beta}$ for every finitely generated ideal $\fa$ of $R_{\beta}$.  Then, using this adjusted construction, we get a UFD $A$ such that $\widehat{A} = T$, $P \cap A = (0)$, and for every $\p \in \spec_1 A$, there is a height $d$ prime ideal of $T$ whose intersection with $A$ is $\p$.  Moreover, if $\fq \in \spec_r T$ with $d + 1 \leq r \leq \dim T$ and $\fq \not\subseteq P$, then ht$(\fq \cap A) > 1$.  It follows that $\alpha(A,(0)) = t$ and $\alpha(A,\p) = d - 1$ for all $\p \in \spec_1 A$.
\end{proof}

%\begin{corollary}
%Corollary where we have all height ones same dimension (not necessarily same as dimension of generic)

%Let $(T,\m)$ be a complete local ring such that $\dim T\geq 3$, no integer is a zerodivisor in $T$, $\operatorname{\depth }T>1$, and $|T|=|T/\m|$.
%Let $d$ be an integer such that $0\leq d\leq\alpha(A,(0))$.  Then there exists a local unique factorization domain $A$ such that $\widehat{A}= T$ and for every $pA\in\spec_1 A$, $\alpha(A,pA)=d$.
%\textbf{this is worded really poorly and is not accurate but I wanted to write something down}
%\end{corollary}

\begin{corollary}\label{equal}
Let $(T,\m)$ be a complete local equidimensional ring such that $\dim T\geq 2$, no integer is a zerodivisor in $T$, $\depth T>1$, and $|T|=|T/\m|$. Let $d$ be an integer such that $1\leq d\leq\dim T-1$ and assume that, for every $\p\in\ass T$, $\height\p\leq d-1$ and that if $z$ is a regular element of $T$ and $Q \in \ass(T/zT)$, then $\height Q \leq d$. Then there exists a local unique factorization domain $A$ such that $\widehat{A}= T$ and for every $pA\in\spec_1A$, $\alpha(A,pA)=\alpha(A,(0))=d-1$.
\end{corollary}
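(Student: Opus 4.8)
The plan is to deduce this as the special case $t = d - 1$ of Theorem~\ref{bigthm}. First I would verify that $t = d - 1$ satisfies the numerical constraints imposed on the pair $(d, t)$ in that theorem. The hypotheses here already give $1 \leq d \leq \dim T - 1$. Setting $t = d - 1$, we have $t = d - 1 \geq 0$ because $d \geq 1$, and $t = d - 1 \leq \dim T - 2 \leq \dim T - 1$ because $d \leq \dim T - 1$; finally $d - 1 \leq t$ holds trivially with equality. So $(d, t) = (d, d-1)$ is an admissible pair for Theorem~\ref{bigthm}.

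Next I would observe that the remaining hypotheses on $T$ are identical in the two statements: $(T,\m)$ is a complete local equidimensional ring with $\dim T \geq 2$ and $\depth T > 1$, no integer of $T$ is a zerodivisor, $|T| = |T/\m|$, every $\p \in \ass T$ has $\height \p \leq d - 1$, and every $Q \in \ass(T/zT)$ with $z$ regular has $\height Q \leq d$. Therefore Theorem~\ref{bigthm} applies and produces a local UFD $A$ with $\widehat{A} = T$, $\alpha(A,(0)) = t = d - 1$, and $\alpha(A,\p) = d - 1$ for every $\p \in \spec_1 A$. Since $A$ is a unique factorization domain, every height one prime ideal of $A$ is principal, generated by a prime element $p$; thus the conclusion of Theorem~\ref{bigthm} reads precisely $\alpha(A, pA) = \alpha(A,(0)) = d - 1$ for every $pA \in \spec_1 A$, which is the assertion of the corollary.

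There is essentially no obstacle beyond the routine bookkeeping above; in particular, since $t = d - 1$, the proof of Theorem~\ref{bigthm} lies in its first case, where the auxiliary prime ideal $P$ plays no role, so nothing further needs checking. If one wanted a self-contained argument one could instead inline that first case: build the chain $\{R_\beta\}$ of $Z_d$-subrings indexed by the diagonal of $\spec_d T \times (T/\m^2)$, using Lemma~\ref{closeideals} at successor stages and unions at limit stages, set $A = \bigcup_\beta R_\beta$, apply Lemma~\ref{comp} to get $\widehat{A} = T$ with $A$ Noetherian, use Lemma~\ref{union} to see that $A$ satisfies all the conditions of a $Z_d$-subring except cardinality so that $\alpha(A,\p) \geq d - 1$ at every height one prime, note that $A$ meets every height $d$ prime of $T$ nontrivially so that $\alpha(A,(0)) \leq d - 1$, and close the loop with Matsumura's inequality $\alpha(A,(0)) \geq \alpha(A,\p)$ from~\cite{matsumura88} to force equality throughout.
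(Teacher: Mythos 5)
Your proposal is correct and matches the paper's intent exactly: the corollary is the special case $t = d - 1$ of Theorem~\ref{bigthm}, and your verification of the numerical constraints and the identification of height one primes with principal ideals $pA$ in the UFD $A$ is all that is needed.
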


Note that Corollary \ref{equal} answers the Question stated in the introduction posed by Heinzer, Rotthaus, and Sally in the nonexcellent case.  In particular, it shows that there are nonexcellent local integral domains where the dimension of the generic formal fiber ring is positive and where the set

$$\Delta = \{\p\in\spec A\mid\height\p=1\text{ and }\alpha(A,\p) = \alpha(A)\}$$

\noindent is equal to the set of all of the height one prime ideals of the integral domain.

\begin{example}
Let $T=\mathbb{C}[[x_1, \ldots ,x_n]]$.  Then, for any $1\leq d\leq n-1$, $0 \leq t \leq n - 1$ and $d - 1 \leq t$, we can find a local unique factorization domain $A$ such that $\widehat{A}= T$, $\alpha(A,(0))= t$, and $\alpha(A,pA)=d-1$ for every $pA\in\spec_1 A$.  Note that this will also hold true if we replace $\mathbb{C}$ with any uncountable field.
\end{example}

\begin{example}
Let $T=\mathbb{C}[[x,y,z,w]]/(xy-zw)$.  Then there exists a local unique factorization domain $A$ such that $\widehat{A}= T$ and $\alpha(A,pA)=\alpha(A,(0))=2$ for every $pA\in\spec_1 A$.
\end{example}

\begin{example}
Let $T=\mathbb{C}[[v,w,x,y,z]]/(vx-w^2, xz - y^2)$.  Then there exists a local unique factorization domain $A$ such that $\widehat{A}= T$, $\alpha(A,(0))=2$, and $\alpha(A,pA) = 0$ for every $pA\in\spec_1 A$.
\end{example}

\bibliography{references}

\end{document}